\newcommand{\Extend}[5]{\ext@arrow0099{\arrowfill@#1#2#3}{#4}{#5}}
\let\pa=\partial
\def\eps\epsilon
\def\cA{{\cal A}}
\def\C{\mathop{\bf C\kern 0pt}\nolimits}
\def\DD{\mathop{\bf D\kern 0pt}\nolimits}
\def\K{\mathop{\bf K\kern 0pt}\nolimits}
\def\N{\mathop{\bf N\kern 0pt}\nolimits}
\def\Q{\mathop{\bf Q\kern 0pt}\nolimits}
\newcommand{\beq}{\begin{equation}}
\newcommand{\eeq}{\end{equation}}
\newcommand{\ben}{\begin{eqnarray}}
\newcommand{\een}{\end{eqnarray}}
\newcommand{\beno}{\begin{eqnarray*}}
\newcommand{\eeno}{\end{eqnarray*}}
\def\R{\mathop{\mathbb R\kern 0pt}\nolimits}
\newtheorem{theorem}{Theorem}[section]
\newtheorem{proposition}[theorem]{Proposition}
\newtheorem{lemma}[theorem]{Lemma}
\theoremstyle{remark}
\begin{document}

 \title[Schr\"odinger-Choquard equation]{ \bf Scattering for the non-radial  focusing inhomogeneous
nonlinear Schr\"odinger-Choquard equation}

\author{Chengbin Xu}%
\address{The Graduate School of China Academy of Engineering Physics,
 P. O. Box 2101,\ Beijing,\ China,\ 100088;
}%
\email{xcbsph@163.com}%

\begin{abstract}
In this paper, we study the long-time behavior of global solutions to the Schr\"odinger-Choquard equation
$$i\pa_tu+\Delta u=-(I_\alpha\ast|\cdot|^b|u|^{p})|\cdot|^b|u|^{p-2}u.$$
   Inspired by Murphy, who gave a simple proof of scattering for the non-radial inhomogeneous NLS, we prove scattering theory below the ground state for the intercritical case in energy space without radial assumption.
\end{abstract}
\maketitle

\begin{center}
 \begin{minipage}{100mm}
   { \small {{\bf Key Words:}  Schr\"odinger-Choquard equation;   Scattering theory.}
      {}
   }
 \end{minipage}
 \end{center}

\setcounter{section}{0}\setcounter{equation}{0}
\section{Introduction}

\noindent

 Considering the initial value problem (IVP), also called the Cauchy problem for the inhomogenous nonlinear Schr\"odinger-Choquard equation
 \begin{align}\label{INLC}
 \begin{cases}
   &i\pa_tu+\Delta u=-(I_\alpha\ast|\cdot|^b|u|^{p})|\cdot|^b|u|^{p-2}u,\ \ \ \  t\in\R,\ x\in\R^N\\
   &u(0,x)=u_0(x) \in H^1(\R^N)
 \end{cases}
 \end{align}
 where $u:\R\times\R^N\rightarrow\mathbb{C}$ and $N\geq3$. The inhomogeneous term is $|\cdot|^b$ for some $b<0$. The Riesz-potential is defined on $\R^N$ by
 $$I_\alpha:=\frac{\Gamma(\frac{N-\alpha}{2})}{\Gamma(\frac{\alpha}2)\pi^{\frac{N}2}2^{\alpha}|\cdot|^{N-\alpha}}:
 =\frac{\mathcal{K}}{|\cdot|^{N-\alpha}},\ \ 0<\alpha<N.$$
 And
 $$1+\frac{2+\alpha+2b}{N}<p<1+\frac{2+\alpha+2b}{N-2}.$$
We also assume that
\begin{align}\label{condition}
 \min\{2+\alpha+2b,N+b,N+4b+2\alpha,4+\alpha+2b-N\}>0.
\end{align}

The class of solutions to \eqref{INLC} is left invariant by the scaling
\begin{equation}\label{equ:scalam}
  u_{\lambda}(t,x)=\lambda^{\frac{2+2b+\alpha}{2(p-1)}}u(\lambda^2 t,\lambda x),
\end{equation}
which is also invariant in
 $\dot H^{s_c}(\R^N)$ norm with $s_c=\frac{N}2-\frac{2+2b+\alpha}{2(p-1)},$. Thus, we call that the equation \eqref{INLC} is $\dot H^{s_c}(\R^N)$ critical.
Moreover,  equation \eqref{INLC} conserves the mass, defined by
$$M(u):=\int_{\R^N}|u|^2dx,$$
and the energy, defined as the sum of the kinetic and potential energies:
$$E(u):=\int_{\R^N}\Big[\frac12|\nabla u|^2-\frac1{p}(I_\alpha\ast|\cdot|^b|u|^b)|x|^{b}|u(x)|^{p}\Big]dx.$$

 The particular case $b=0$, Eq.\eqref{INLC} becomes the nonlinear generalized Hartree equation. In \cite{FY}, Feng-Yuan have studied the Cauchy problem for the generalized  Hartree equation in the energy subcritical. Miao-Xu-Zhao \cite{MXZ3,MXZ4} have studied the well-posedness issues of the Hartree equation which corresponded to the particular case $p=2$. Saanouni \cite{Sa} proved the scattering theory for the radial case via concentration-compactness roadmap.

For the Eq.\eqref{INLC}, Alharbi-Saanouni \cite{AS} studied the local theory and finite time blow-up. In \cite{Sa2}, Saanouni proved the scattering theory with the radial setting. In this work, we extend Saanouni's work to non-radial case via a new approach established by Murphy \cite{Mu}.

As we known, the equation \eqref{INLC} admits a global but non-scattering solution
$$u(t,x)=e^{it}Q(x),$$
where $Q$ is the ground state, i.e., the solution to elliptic equation
$$-\Delta Q+Q-(I_\alpha\ast|\cdot|^b|Q|^p)|x|^b|Q|^{p-2}Q=0.$$
We refer to Alharbi-Saanouni \cite{AS} about the existence of the ground state for the above elliptic equation. The uniqueness is still open.

Now, we state our main result.
\begin{theorem}\label{theorem}
  Let $p\geq2$ and $b, \alpha$ satisfy \eqref{condition}.  Suppose that $u_0\in H^1(\R^N)$ satisfies
  $$M(u_0)^{1-s_c}E(u_0)^{s_c}<M(Q)^{1-s_c}E(Q)^{s_c}\quad \text{and}\quad \|u_0\|_{L_x^2}^{1-s_c}\|\nabla u_0\|_{L_x^2}^{s_c}<\|Q\|_{L_x^2}^{1-s_c}\|\nabla Q\|_{L_x^2}^{s_c}.$$
  Then, there exists a unique global solution $u$ to \eqref{INLC}. Moreover, the global solution $u$ scatters in the sense that there exists $u_\pm\in H^1(\R^N)$ such that
  \begin{equation}\label{equ:gsusc}
    \lim_{t\to\pm\infty}\|u(t,x)-e^{it\Delta}u_\pm\|_{H^1_x(\R^N)}=0.
  \end{equation}
\end{theorem}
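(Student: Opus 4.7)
The plan is to adapt Murphy's treatment of the inhomogeneous NLS to the Choquard setting, bypassing concentration--compactness entirely. The argument decomposes into (a) a variational/trapping step, (b) a Morawetz--virial estimate that uses the weight $|x|^{b}$ in place of radial symmetry, and (c) a scattering criterion that converts spacetime decay into finiteness of a scattering norm.

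First I would establish the variational picture. Using the sharp Gagliardo--Nirenberg inequality associated to the inhomogeneous Choquard nonlinearity and its characterization by the ground state $Q$, together with the Pohozaev identities for $Q$, the hypothesis $M(u_0)^{1-s_c}E(u_0)^{s_c} < M(Q)^{1-s_c}E(Q)^{s_c}$ and $\|u_0\|_{L^2}^{1-s_c}\|\nabla u_0\|_{L^2}^{s_c} < \|Q\|_{L^2}^{1-s_c}\|\nabla Q\|_{L^2}^{s_c}$ give, by a standard continuity argument exploiting the conservation of $M$ and $E$, a $\delta>0$ with
\begin{equation*}
\sup_{t\in I}\|u(t)\|_{L^2}^{1-s_c}\|\nabla u(t)\|_{L^2}^{s_c}\leq(1-\delta)\|Q\|_{L^2}^{1-s_c}\|\nabla Q\|_{L^2}^{s_c}
\end{equation*}
on any interval of existence $I$. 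This simultaneously gives global existence, a uniform $H^{1}$ bound, and coercivity of a localized virial quantity of the form $2\|\nabla u\|_{L^{2}}^{2}-\tfrac{N+\alpha+2b}{p}\int(I_\alpha\ast|\cdot|^{b}|u|^{p})|x|^{b}|u|^{p}\,dx\geq c\|\nabla u\|_{L^{2}}^{2}$.

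Second I would prove a Morawetz--type estimate. Applying the virial identity with a smoothed, truncated version of the weight $a(x)=|x|$ to $\tfrac{d^{2}}{dt^{2}}\int a|u|^{2}\,dx$, integrating in time over $I$, and controlling the boundary terms by the uniform $H^{1}$ bound, one expects to obtain
\begin{equation*}
\int_{I}\!\!\int_{\R^{N}}\frac{1}{|x|}\bigl(I_\alpha\ast|\cdot|^{b}|u|^{p}\bigr)|x|^{b}|u(t,x)|^{p}\,dx\,dt\lesssim 1.
\end{equation*}
The crucial point, and the main obstacle, is that for the non-local Choquard nonlinearity the virial commutator produces terms involving $\nabla I_\alpha$, so one must carefully split the convolution into near- and far-field pieces and use the coercivity of step one to absorb the unfavorable contributions; the quantitative conditions in \eqref{condition} (in particular $N+4b+2\alpha>0$ and $4+\alpha+2b-N>0$) should be exactly what is needed to close the weighted Hardy--type bounds. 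The presence of the weight $|x|^{b}$ with $b<0$ concentrates the nonlinear density near the origin, so the Morawetz output itself already provides non-radial spacetime decay of the nonlinearity, removing the need for any radiality assumption.

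Third I would close the argument by a standard scattering criterion. Using the Morawetz estimate together with Strichartz and a dispersive bound on the nonlinear term, one shows that on any sufficiently long time window the Duhamel contribution of the nonlinearity to a critical Strichartz-type scattering norm can be made arbitrarily small. A perturbation/stability lemma for \eqref{INLC}, proved from the $H^{1}$ local theory of Alharbi--Saanouni and standard Strichartz/Hardy--Littlewood--Sobolev bounds adapted to the weight $|x|^{b}$, then upgrades this to a global finite bound on the scattering norm, from which \eqref{equ:gsusc} follows by a routine Duhamel/Strichartz computation to produce $u_{\pm}\in H^{1}$. The bulk of the work, and the step most likely to require delicate estimates specific to this paper, is the Morawetz bound and the accompanying control of the Riesz--potential commutators in the non-radial regime.
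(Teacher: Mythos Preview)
Your three-step plan mirrors the paper's architecture, and step~(a) is essentially what is done there. The gap is in step~(b). With the weight $a(x)=|x|$ and the \emph{focusing} sign, every nonlinear contribution to the virial identity is nonpositive: the $\Delta a$ term carries $-(2-\tfrac4p)(N-1)\int|x|^{b-1}|u|^p(I_\alpha\ast|\cdot|^b|u|^p)\le0$, the $\nabla(|x|^b)$ term gives $\tfrac{4b}{p}\int|x|^{-1}(I_\alpha\ast\cdots)|x|^b|u|^p\le0$, and the Riesz commutator is $-\tfrac{2\mathcal K(N-\alpha)}{p}\iint(\tfrac{x}{|x|}-\tfrac{y}{|y|})\cdot\tfrac{x-y}{|x-y|^{N-\alpha+2}}\cdots\le0$. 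So the quantity you want to bound sits on the wrong side of the inequality, and the coercivity you cite, $2\|\nabla u\|_{L^2}^2-\tfrac{B}{p}\int(\cdots)\ge c\|\nabla u\|_{L^2}^2$, is an \emph{unweighted} statement about the full gradient that does not match the $|x|^{-1}$-weighted angular kinetic term produced by $a=|x|$. There is no mechanism here to obtain $\int_I\int|x|^{-1}(I_\alpha\ast|\cdot|^b|u|^p)|x|^b|u|^p\,dx\,dt\lesssim1$.

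What the paper does instead is take $a(x)=|x|^2$ on $\{|x|\le R\}$ (grafted onto $3R|x|$ outside), so that on the ball $a_{jk}=2\delta_{jk}$ and the virial output is precisely $8\int_{|x|<R}|\nabla u|^2-\tfrac{4B}{p}\int_{|x|<R}(I_\alpha\ast|\cdot|^b|u|^p)|x|^b|u|^p$. A separate \emph{coercivity on balls} lemma for $\chi_R u$ (not the global coercivity) then gives the lower bound $c\int|\nabla(\chi_R u)|^2$; the exterior remainders are $O(R^{-2})+O(R^{b})$, the $R^{b}$ coming exactly from the weight $|x|^b$ on $\{|x|>R\}$ and replacing the role of radiality. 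Sobolev embedding yields $\tfrac1T\int_0^T\int_{|x|<R}|u|^{2N/(N-2)}\lesssim R/T+R^{b}$, hence a sequence $t_n\to\infty$ along which the $L^{2N/(N-2)}$ (and thus $L^2$) mass on growing balls vanishes. This is fed into a Tao/Dodson--Murphy--type scattering criterion---showing $\|e^{i(t-T_0)\Delta}u(T_0)\|_{S(\dot H^{s_c},[T_0,\infty))}\ll1$ by splitting the Duhamel term at $T_0-\epsilon^{-\eta}$, treating the early piece by dispersive decay and the recent piece by the smallness of $\|\chi_R u\|_{L^2}$---rather than the perturbation/stability route you outline, which would require the global spacetime bound you do not have.
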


Theorem \ref{theorem} was established first in the radial case by Saanouni \cite{Sa2}. For the non-radial case, the non-radial approach by Dodson-Murphy \cite{DM} fails since the lack of Galilean invariant. However, we have a key observation that the nonlinearity has a decay factor $|x|^{b}$ at infinity. Hence, this allow us treat the non-radial setting as the radial case (for more details, see \cite{Mu}).

This paper is organized as follows: In Section $2$, we recall some basic estimates to prove some main tools and establish a new scattering criterion. In Section $3$, we prove a Morawetz estimate, which in turn implies `energy evacuation' as $t\to\infty$. Thus, these two ingredients quickly complete Theorem \ref{theorem}. Finally, a Morawetz identity is proved in Appendix.


We conclude the introduction by giving some notations which
will be used throughout this paper. We always use $X\lesssim Y$ to denote $X\leq CY$ for some constant $C>0$.
Similarly, $X\lesssim_{u} Y$ indicates there exists a constant $C(u)$ depending on $u$ such that $X\leq C(u)Y$.
We also use the notation $\mathcal{O}$: e.g. $A=\mathcal{O}(B)$ indicates $A\leq CB$ for constant $C>0$.
The derivative operator $\nabla$ refers to the spatial  variable only.
We use $L^r(\mathbb{R}^N)$ to denote the Banach space of functions $f:\mathbb{R}^N\rightarrow\mathbb{C}$ whose norm
$$\|f\|_r:=\|f\|_{L^r}=\Big(\int_{\mathbb{R}^N}|f(x)|^r dx\Big)^{\frac1r}$$
is finite, with the usual modifications when $r=\infty$. For any non-negative integer $k$,
we denote by $H^{k,r}(\mathbb{R}^N)$ the Sobolev space defined as the closure of smooth compactly supported functions in the norm $\|f\|_{H^{k,r}}=\sum_{|\alpha|\leq k}\|\frac{\partial^{\alpha}f}{\partial x^{\alpha}}\|_r$, and we denote it by $H^k$ when $r=2$.
For a time slab $I$, we use $L_t^q(I;L_x^r(\mathbb{R}^N))$ to denote the space-time norm
\begin{align*}
  \|f\|_{L_{t}^qL^r_x(I\times \R^N)}=\bigg(\int_{I}\|f(t,x)\|_{L^r_x}^q dt\bigg)^\frac{1}{q}
\end{align*}
with the usual modifications when $q$ or $r$ is infinite, sometimes we use $\|f\|_{L^q(I;L^r)}$ or $\|f\|_{L^qL^r(I\times\mathbb{R}^N)}$ for short.

\section{Preliminaries}

\noindent

Let us start this section by introducing the notation used throughout the paper. We recall some Strichartz estimates associated to the linear Schr\"odinger propagator.

We say the pair $(q,r)$ is $\dot{H}^s$-admissible, if it  satisfies the condition
$$\frac{2}{q}=\frac{N}{2}-\frac{N}{r}-s,\ \ 2\leq q,r\leq\infty, \text{and}\ \ N\geq3.$$

For $s\in(0,1)$, We define the sets $\Lambda_s$:
$$\Lambda_s:=\left\{(q,r)\ \text{is}\ \dot{H}^s\text{-admissible};\Big(\frac{2N}{N-2s}\Big)^+\leq r\leq \Big(\frac{2N}{N-2}\Big)^-\right\},$$
and
$$\Lambda_{-s}:=\left\{(q,r)\ \text{is}\ \dot{H}^{-s}\text{-admissible};\Big(\frac{2N}{N-2s}\Big)^+\leq r\leq \Big(\frac{2N}{N-2}\Big)^-\right\},$$
and let $\Lambda_0$ denote the $L^2$-admissible. Here, $a^-$ is a fixed number slightly smaller than $a$ ($a^-=a-\epsilon$, where $\epsilon$ is small enough). $a^+$ can be defined by the same way.

Next, we define the following Strichartz norm
$$\|u\|_{S(\dot{H}^s,I)}=\sup_{(q,r)\in \Lambda_s}\|u\|_{L_t^qL_x^r(I)}$$
and dual Strichartz norm
$$\|u\|_{S^{'}(\dot{H}^{-s},I)}=\inf_{(q,r)\in \Lambda_{-s}}\|u\|_{L_t^{q^{'}}L_x^{r^{'}}(I)},$$
where $q'$ denotes the dual exponent to $q$, i.e. the solution to $\tfrac1q+\frac1{q'}=1.$
 If $I=\R$, $I$ is omitted usually.

 Now, let us recall some results about Strichartz estimates \cite{Ca,Fo,KT} and Hardy-Littlewood-Sobolev's inequality \cite{Li}.
\begin{lemma}
  Let $0\in I\subset\R$, the following statement hold
  \begin{enumerate}
    \item[$(i)$]$($linear estimate$)$
    $$\|e^{it\Delta}f\|_{S(\dot{H}^s)}\leq C\|f\|_{\dot{H}^s};$$
    \item[$(ii)$]$($nonlinear estimate I$)$
  $$\left\|\int_{0}^{t}e^{i(t-s)\Delta}g(\cdot,s)ds\right\|_{S(L^2,I)}\leq C\|g\|_{S^{'}(L^2,I)};$$
    \item[$(iii)$]$($nonlinear estimate II$)$
  $$\left\|\int_{0}^{t}e^{i(t-s)\Delta}g(\cdot,s)ds\right\|_{S(\dot{H}^s,I)}\leq C\|g\|_{S^{'}(\dot{H}^{-s},I)}.$$

  \end{enumerate}

\end{lemma}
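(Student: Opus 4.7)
The three statements are standard Strichartz inequalities, and my plan is to derive all of them from a single ingredient: the dispersive bound $\|e^{it\Delta}f\|_{L^\infty_x}\lesssim |t|^{-N/2}\|f\|_{L^1_x}$, obtained from the explicit kernel of $e^{it\Delta}$, together with the unitarity $\|e^{it\Delta}f\|_{L^2_x}=\|f\|_{L^2_x}$. Interpolating between these gives the one-parameter family $\|e^{it\Delta}f\|_{L^r_x}\lesssim |t|^{-N(\frac12-\frac1r)}\|f\|_{L^{r'}_x}$, which is the only analytic input required for the subsequent arguments.

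For part (i) at $s=0$ and part (ii), I would run the classical $TT^*$ argument. The homogeneous bound $L^2\to L^q_tL^r_x$ for an $L^2$-admissible pair $(q,r)\in\Lambda_0$ is, by duality, equivalent to the untruncated inhomogeneous bound from $L^{q'}_tL^{r'}_x$ into $L^2_x$, which in turn is equivalent (after squaring) to a bilinear form estimate. Inserting the dispersive inequality above into this bilinear form and then applying Hardy--Littlewood--Sobolev in the time variable handles every non-endpoint case $q>2$; the endpoint $(q,r)=(2,\tfrac{2N}{N-2})$ is covered by the Keel--Tao bilinear interpolation argument. The retarded version appearing in (ii) follows from the Christ--Kiselev lemma away from the endpoint, and directly from the Keel--Tao argument at the endpoint.

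For the fractional cases (i) with $s>0$ and (iii), I would exploit that $|\nabla|^s$ commutes with $e^{it\Delta}$. For (i), one writes $\|e^{it\Delta}f\|_{L^q_tL^r_x}=\||\nabla|^{-s}e^{it\Delta}|\nabla|^sf\|_{L^q_tL^r_x}$, and by the Sobolev embedding $\dot W^{s,r_0}_x\hookrightarrow L^r_x$ with $\tfrac1{r_0}=\tfrac1r+\tfrac s N$ one chooses $(q,r_0)\in\Lambda_0$ to reduce matters to the $L^2$-admissible estimate already in hand applied to $|\nabla|^sf\in L^2_x$. The inhomogeneous statement (iii) is obtained analogously by pairing the dual Sobolev embedding with the inhomogeneous $L^2$-Strichartz bound from (ii). The restrictions $(\tfrac{2N}{N-2s})^+\leq r\leq (\tfrac{2N}{N-2})^-$ baked into the definitions of $\Lambda_s$ and $\Lambda_{-s}$ are precisely the ranges under which these Sobolev reductions are permitted.

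The main delicate point I anticipate is the bookkeeping rather than any deep analytic obstacle: for each $(q,r)\in\Lambda_s$ one must exhibit a companion $L^2$-admissible pair $(q,r_0)$ compatible with the Sobolev embedding above, and verify $r_0\in[2,\tfrac{2N}{N-2}]$, with the analogous check on the dual side for $\Lambda_{-s}$. The $(\cdot)^+$ and $(\cdot)^-$ notation in the definition of $\Lambda_s$ is designed precisely to keep $r_0$ strictly inside the allowed range so that one never brushes against the $L^2$-endpoint; once this is verified uniformly, the three inequalities follow immediately from the dispersive estimate, $TT^*$, Keel--Tao, and Sobolev embedding.
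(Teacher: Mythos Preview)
The paper itself does not give a proof of this lemma: it simply states the estimates and cites Cazenave, Foschi, and Keel--Tao. Your outline for parts (i) and (ii) is the standard route and is correct: dispersive estimate plus $TT^*$, Keel--Tao for the endpoint, Christ--Kiselev for the truncation, and then the Sobolev shift $|\nabla|^s$ for the $\dot H^s$-admissible linear estimate.

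The gap is in your treatment of (iii). The reduction you describe --- apply Sobolev on the output side to land in an $L^2$-admissible space, invoke (ii), then use a ``dual Sobolev embedding'' on the forcing side --- does not close. Concretely: if $(q,r)\in\Lambda_s$ and you shift to $(q,r_0)\in\Lambda_0$ via $\tfrac1{r_0}=\tfrac1r+\tfrac sN$, then after commuting $|\nabla|^s$ through the propagator and applying (ii) you must bound $\||\nabla|^s g\|_{L^{\tilde q'}_t L^{\tilde r_0'}_x}$ by $\|g\|_{L^{\tilde q'}_t L^{\tilde r'}_x}$, where $(\tilde q,\tilde r)\in\Lambda_{-s}$ and $\tfrac1{\tilde r_0'}=\tfrac1{\tilde r'}+\tfrac sN$. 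Since $\tilde r_0'<\tilde r'$, this asks for $\|h\|_{L^{\tilde r_0'}_x}\lesssim \||\nabla|^{-s}h\|_{L^{\tilde r'}_x}$, i.e.\ a bounded inverse for the Riesz potential on Lebesgue spaces, which is false. No rearrangement of the Sobolev step avoids this: the estimate in (iii) genuinely couples a $\Lambda_s$ pair on the solution with a $\Lambda_{-s}$ pair on the forcing, neither of which is $L^2$-admissible, and such estimates lie strictly outside the range obtainable from (ii) by Sobolev embedding alone.

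What is actually needed for (iii) is the inhomogeneous Strichartz estimate for non--$L^2$-admissible (``acceptable'') pairs, proved directly from the dispersive bound by Kato and, in sharp form, by Foschi; this is exactly why the paper cites \cite{Fo}. The $(\cdot)^+$ and $(\cdot)^-$ cutoffs in the definitions of $\Lambda_{\pm s}$ are there to keep the pairs strictly inside Foschi's acceptability region, not to enable a Sobolev reduction to $\Lambda_0$. If you want to keep your proof self-contained, you should replace the last paragraph by a direct bilinear/interpolation argument in the style of Foschi (or cite it), rather than routing through (ii).
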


\begin{lemma}
  Let $N\geq3$, $0<\lambda<N$ and $1<r,s<\infty<\infty$ and $f\in L^r, g\in L^s$. If $\frac1r+\frac1s+\frac{\lambda}N=2$, then
  $$\iint_{\R^N\times\R^N}\frac{f(x)g(y)}{|x-y|^\lambda}dxdy\leq C(N,s,\lambda)\|f\|_{L^r_x}\|g\|_{L^s_x}.$$
\end{lemma}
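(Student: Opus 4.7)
The stated inequality is the classical Hardy--Littlewood--Sobolev inequality. The plan is to follow Hedberg's approach: reduce by duality to the mapping property of the Riesz potential
\begin{equation*}
I_\lambda g(x) := \int_{\R^N} \frac{g(y)}{|x-y|^{\lambda}}\,dy,
\end{equation*}
and then establish its boundedness from $L^s$ into $L^{r'}$ via a pointwise splitting argument combined with the Hardy--Littlewood maximal inequality.

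First I would write the left-hand side by Fubini and apply H\"older in $x$:
\begin{equation*}
\iint_{\R^N\times\R^N}\frac{f(x)g(y)}{|x-y|^{\lambda}}\,dx\,dy \;=\; \int_{\R^N} f(x)(I_\lambda g)(x)\,dx \;\leq\; \|f\|_{L^r}\,\|I_\lambda g\|_{L^{r'}}.
\end{equation*}
The scaling identity $\frac{1}{r}+\frac{1}{s}+\frac{\lambda}{N}=2$ is equivalent to $\frac{1}{r'}=\frac{1}{s}+\frac{\lambda}{N}-1$, which is precisely the Sobolev exponent for $I_\lambda:L^s\to L^{r'}$. The assumptions $1<r,s<\infty$ and $0<\lambda<N$ force $r'\in(1,\infty)$ and, equivalently, $1<s<\frac{N}{N-\lambda}$, a condition I will use below.

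Second, for each $x\in\R^N$ and any $R>0$, I would decompose $\R^N=\{|y-x|\leq R\}\cup\{|y-x|>R\}$. A dyadic sum on the near region gives
\begin{equation*}
\int_{|y-x|\leq R}\frac{|g(y)|}{|x-y|^{\lambda}}\,dy \;\leq\; \sum_{k\geq 0}(2^{-k-1}R)^{-\lambda}\int_{|x-y|\leq 2^{-k}R}|g(y)|\,dy \;\lesssim\; R^{N-\lambda}\,Mg(x),
\end{equation*}
where $Mg$ is the Hardy--Littlewood maximal function and the geometric sum converges because $\lambda<N$. On the far region, H\"older in $y$ with exponents $s,s'$ yields
\begin{equation*}
\int_{|y-x|>R}\frac{|g(y)|}{|x-y|^{\lambda}}\,dy \;\leq\; \|g\|_{L^s}\Big(\int_{|y-x|>R}|x-y|^{-\lambda s'}\,dy\Big)^{1/s'} \;\lesssim\; R^{N/s'-\lambda}\,\|g\|_{L^s},
\end{equation*}
where convergence of the radial integral requires exactly $\lambda s'>N$, i.e.\ $s<\frac{N}{N-\lambda}$.

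Third, I would balance the two contributions by optimizing in $R$, namely choosing $R=(\|g\|_{L^s}/Mg(x))^{s/N}$, to arrive at the pointwise Hedberg bound
\begin{equation*}
|I_\lambda g(x)| \;\lesssim\; \|g\|_{L^s}^{1-s/r'}\,(Mg(x))^{s/r'}.
\end{equation*}
Raising this to the $r'$-th power, integrating in $x$, and invoking the $L^s$-boundedness of $M$ (which needs $s>1$) gives $\|I_\lambda g\|_{L^{r'}}\lesssim \|g\|_{L^s}$, and combining with the duality step above closes the proof. The main point to watch, rather than any deep estimate, is the bookkeeping of exponents: one must verify that the dimensional identity $\frac{1}{r}+\frac{1}{s}+\frac{\lambda}{N}=2$ together with $1<r,s<\infty$ and $0<\lambda<N$ simultaneously places $r'$ in $(1,\infty)$, keeps $\lambda s'>N$ so the tail integral converges, and keeps $\lambda<N$ so the dyadic sum converges---precisely the three ingredients used above.
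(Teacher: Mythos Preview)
Your argument is correct: this is the standard Hedberg proof of the Hardy--Littlewood--Sobolev inequality, and every step (the dyadic estimate near the diagonal, the H\"older tail bound, the optimization in $R$, and the appeal to the $L^s$-boundedness of the maximal function) is valid under the stated exponent conditions, which you verify carefully.

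Note, however, that the paper does not actually give a proof of this lemma: it is stated as a classical result and cited from Lieb's textbook \cite{Li}. So there is no ``paper's own proof'' to compare against---you have supplied a self-contained argument where the paper simply invokes the literature. The Hedberg approach you chose is elementary and avoids interpolation or rearrangement machinery, which makes it a good fit for an appendix or for readers who want to see the mechanism; Lieb's treatment proceeds instead via symmetric-decreasing rearrangement and the layer-cake representation, which additionally identifies the extremizers and the sharp constant, information your argument does not provide (and which the present paper does not need).
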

Next, we prove some interpolation estimates for nonlinearities, which plays an important role in proving scattering theory.
\begin{lemma}[Nonlinear estimate]\label{Non-e1}
Let $N\geq 3$ , $b, \alpha$ satisfy \eqref{condition} and $p\geq2$. Then there exists $\theta\in (0,2(p-1))$ sufficiently small such that
\begin{enumerate}
  \item[$(i)$] $\|(I_{\alpha}\ast|\cdot|^{b}|u|^p)|\cdot|^b|u|^{p-2}u\|_{S'(\dot{H}^{s_c})}\lesssim \|u\|_{L_t^\infty H_x^1}^\theta\|u\|_{S(\dot{H}^{s_c})}^{2p-1-\theta}$;

  \item[$(ii)$]$\|(I_{\alpha}\ast|\cdot|^{b}|u|^p)|\cdot|^b|u|^{p-2}u\|_{S'(L^2)}\lesssim \|u\|_{L_t^\infty L^2}^\theta\|u\|_{S(\dot{H}^{s_c})}^{2(p-1)-\theta}\|u\|_{S(L^2)}$;
   \item[$(iii)$]$\|\nabla(I_{\alpha}\ast|\cdot|^{b}|u|^p)|\cdot|^b|u|^{p-2}u\|_{S'(L^2)}\lesssim \|u\|_{S(\dot{H}^{s_c})}^{2(p-1)-\theta}(\|u\|_{L^\infty H^1_x}^{\theta}\|\nabla u\|_{S(L^2)}+\|u\|_{L^\infty H^1_x}^{1+\theta})$.

\end{enumerate}

\end{lemma}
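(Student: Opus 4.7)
The plan is to prove the three estimates in parallel, since all of them reduce to the same core spatial bound on $(I_\alpha\ast|\cdot|^b|u|^p)|\cdot|^b|u|^{p-2}u$ after different choices of dual Strichartz pair. The essential tools are the Hardy--Littlewood--Sobolev inequality (Lemma 2.2) to dispose of the Riesz convolution, H\"older's inequality, and Sobolev embedding. The key structural trick --- the one that produces the factor $\|u\|_{L_t^\infty H_x^1}^\theta$ --- is to split the spatial integration into the near region $A:=\{|x|\leq 1\}$ and its complement $A^c$, so that the singular weight $|x|^b$ (with $b<0$) can be placed in different Lebesgue spaces on the two pieces; the four positivity conditions in \eqref{condition} are precisely what guarantee that $|x|^b\car_A$ and $|x|^b\car_{A^c}$ sit in the $L^\rho$-spaces one needs.

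For (i) I would fix a pair $(q_0,r_0)\in\Lambda_{-s_c}$ to be selected at the end, take the $L_t^{q_0'}L_x^{r_0'}$ norm of the nonlinearity, and apply HLS to the convolution to reach
\[
\bigl\|(I_\alpha\ast|\cdot|^b|u|^p)|\cdot|^b|u|^{p-2}u\bigr\|_{L_x^{r_0'}}\lesssim \bigl\||x|^b|u|^p\bigr\|_{L_x^{a_1}}\bigl\||x|^b|u|^{p-1}\bigr\|_{L_x^{a_2}}
\]
with $\tfrac{1}{a_1}+\tfrac{1}{a_2}+\tfrac{N-\alpha}{N}=1+\tfrac{1}{r_0'}$. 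Splitting each of these two factors over $A\cup A^c$, applying H\"older inside each region, and absorbing $|x|^b\car_A$ and $|x|^b\car_{A^c}$ into $L^\rho$-spaces where they are integrable (here \eqref{condition} enters), I can collect $2p-1-\theta$ factors of $u$ in an $S(\dot H^{s_c})$-admissible Lebesgue space, while the remaining $\theta$ fractional factor, after Sobolev embedding, is controlled by $\|u\|_{L_t^\infty H_x^1}^\theta$. Taking the time norm and recognising the leftover $L_t^{q_0}L_x^{r_0}$ factor as an $S(\dot H^{s_c})$-norm finishes the estimate.

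Items (ii) and (iii) follow by the same HLS+H\"older+splitting scheme with different parameters. For (ii) one chooses $(q_0',r_0')$ in the $L^2$-Strichartz dual set, which forces one of the $u$'s to carry an $L^2$-based Strichartz norm instead of an $\dot H^{s_c}$-based one, producing the extra $\|u\|_{S(L^2)}$ on the right. For (iii) the product rule yields
\[
\nabla\bigl(|x|^b|u|^{p-2}u\bigr)=b|x|^{b-2}x|u|^{p-2}u+|x|^b\nabla\bigl(|u|^{p-2}u\bigr),
\]
and analogously for $\nabla(|x|^b|u|^p)$; the genuinely new term is $|x|^{b-1}|u|^{p-1}$, which I bound by means of the Hardy inequality $\||x|^{-1}u\|_{L^2}\lesssim\|\nabla u\|_{L^2}$ to convert the extra $|x|^{-1}$ into an additional $H^1$-factor, producing the $\|u\|_{L^\infty H^1_x}^{1+\theta}$ summand.

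The main obstacle I anticipate is not any one of the computations but the bookkeeping: one must find a single $\theta\in(0,2(p-1))$ such that every Lebesgue exponent arising from the H\"older decomposition lies simultaneously in the admissible window $(2N/(N-2s_c))^+\leq r\leq (2N/(N-2))^-$ defining $\Lambda_{s_c}$ (or in the analogous windows for $\Lambda_{-s_c}$ and $\Lambda_0$), and so that $|x|^b\car_A$ and $|x|^b\car_{A^c}$ are integrable in the resulting dual exponents. This admissibility window is narrow, and checking that the associated linear system for the exponents has a solution for every triple $(p,\alpha,b)$ in the intercritical range is precisely where the four inequalities in \eqref{condition} are used; verifying the non-emptiness of the admissible range of $\theta$ --- rather than executing each H\"older step --- is the real work.
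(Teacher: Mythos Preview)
Your proposal is correct and follows essentially the same route as the paper: Hardy--Littlewood--Sobolev on the convolution, the split over $B_1(0)$ and $B_1^c(0)$ to place $|x|^b$ in an appropriate $L^\rho$, H\"older to distribute the $u$-factors between an $S(\dot H^{s_c})$ space and $L_t^\infty H_x^1$, and Hardy's inequality to handle the $|x|^{b-1}$ terms arising from the product rule in (iii). The paper shortcuts your bookkeeping concern by fixing a single explicit spatial exponent $\hat r$ (and $\bar r$ for (iii)) in closed form in terms of $N,p,\alpha,b,\theta$ so that the three pairs $(\hat q,\hat r)\in\Lambda_0$, $(\hat a,\hat r)\in\Lambda_{s_c}$, $(\tilde a,\hat r)\in\Lambda_{-s_c}$ all share the same $\hat r$; the time-scaling identities $\tfrac{1}{\tilde a'}=\tfrac{2p-1-\theta}{\hat a}$ and $\tfrac{1}{\hat q'}=\tfrac{2(p-1)-\theta}{\hat a}+\tfrac{1}{\hat q}$ then follow automatically, so the admissibility verification collapses to checking that this one $\hat r$ lies in the required window for $\theta$ small.
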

\begin{proof}
  In view of the singular factor $|x|^b$ in the nonlinearity, we frequently divide our analysis in two region. Let $B_1(0)$ denote the unit ball of radius $1$ and center in origin, and $B_1^c(0)$ be $\R^N\setminus B_1(0)$.

  We introduce the parameters
  \begin{align}\label{exponent-1}
    \hat{r}=\frac{2(p-1)N(2p-\theta)}{2(p-1)(N+2b+\alpha)-\theta(\alpha+2b+2)}.
  \end{align}
  Choosing $\hat{q},\hat{a},\tilde{a}$ and $\theta$ such that $(\hat{q},\hat{r})\in\Lambda_0$, $(\hat{a},\hat{r})\in\Lambda_{s_c}$ and $(\tilde{a},\hat{r})\in\Lambda_{-s_c}$. These exponents obey the relations
  \begin{align}\label{scale-1}
    \frac1{\tilde{a}'}=\frac{2p-1-\theta}{\hat{a}}\ \ and\ \ \frac1{\hat{q}'}=\frac{2(p-1)-\theta}{\hat{a}}+\frac1{\hat{q}}.
  \end{align}

  We first prove $(i)$, using the pair $L_t^{\tilde{a}'}L_x^{\hat{r}'}$. Let $r_1$ be chosen later and define $\mu$ so that
  \begin{align}
    1+\frac{\alpha}{N}=\frac{2}{\mu}+\frac{2p-\theta}{\hat{r}}+\frac1{r_1}.
  \end{align}
  Then, $A\in\{B_1(0),B_1^c(0)\}$, using the Hardy-Littlewood-Paley inequality to estimate
  \begin{align}
    \|(I_{\alpha}\ast|\cdot|^{b}|u|^p)|\cdot|^b|u|^{p-2}u\|_{L^{\hat{r}'}}\lesssim\||x|^b\|_{L^{\mu}(A)}^2\|u\|_{L^{\theta r_1}}^{\theta}\|u\|_{L^{\hat{r}}}^{2p-1-\theta}.
  \end{align}
  Using the scaling relation above, we derive
  $$\frac{2N}{\mu}+2b=\frac{\theta(2+\alpha+2b)}{2(p-1)}-\frac{N}{r_1}.$$
  Thus, if $A=B_1(0)$ we choose $r_1$ so that $\theta r_1=\frac{2N}{N-2}$. On the other hand, if $A=B_1^c(0)$, we choose $\theta r_1=2$. In both case, we have Sobolev embedding that $H^1\subset L^{\theta r_1}$. Thus, we take the $L_t^{\tilde{a}'}$-norm, apply H\"older's inequality, and use the above scaling relation \eqref{scale-1} to obtain
  $$\|(I_{\alpha}\ast|\cdot|^{b}|u|^p)|\cdot|^b|u|^{p-2}u\|_{L_t^{\tilde{a}'}L_x^{\hat{r}'}}\lesssim \|u\|_{L_t^\infty H_x^1}^\theta\|u\|_{L_t^{\hat{a}}L_x^{\hat{r}}}^{2p-1-\theta}.$$
  The estimates of $(ii)$ is similar. In this case, we use the second relation of \eqref{scale-1} to get
  $$\|(I_{\alpha}\ast|\cdot|^{b}|u|^p)|\cdot|^b|u|^{p-2}u\|_{L_t^{\hat{q}'}L_x^{\hat{r}'}}\lesssim \|u\|_{L_t^\infty H^1_x}^\theta\|u\|_{L_t^{\hat{a}}L_x^{\hat{r}}}^{2 (p-1)-\theta}\|u\|_{L_t^{\hat{q}}L_x^{\hat{r}}}.$$

  Consider now $(iii)$. We choose the exponents $\bar{q},\bar{r},\bar{a}$
  $$\bar{r}=\frac{4(p-1)N(2p-1-\theta)}{2(p-1)(N+2+4b+2\alpha)-\theta(2+\alpha+2b)},$$
  such that $(\bar{q},\bar{r})\in\Lambda_{0},(\bar{a},\bar{r})_{\Lambda_{s_c}}$. Since
  \begin{align*}
    |\nabla(I_{\alpha}\ast|\cdot|^b|u|^p)|x|^b|u|^{p-2}u|\lesssim& (I_{\alpha}\ast|\cdot|^{b-1}|u|^p)|x|^b|u|^{p-1}\\
    &+(I_{\alpha}\ast|\cdot|^b|u|^{p-1}|\nabla u|)|x|^b|u|^{p-1}\\
    &+(I_{\alpha}\ast|\cdot|^b|u|^p)|x|^{b-1}|u|^{p-1}\\
    &+(I_{\alpha}\ast|\cdot|^b|u|^p)|x|^b|u|^{p-2}|\nabla u|,
  \end{align*}
 by using Hardy-Littlewood-Sobolev's inequality, we get
  \begin{align*}
  \|\nabla(I_{\alpha}\ast|\cdot|^b|u|^p)|x|^b|u|^{p-2}u\|_{L_x^{\frac{2N}{N+2}}}\lesssim& \||x|^b\|_{L_x^{r_1}(A)}^2(\||x|^{-1}|u|^p\|_{L_x^{r_2}}+\||u|^{p-1}\nabla u\|_{L_x^{r_2}})\||u|^{p-1}\|_{L_x^{r_3}}\\
  &+\||x|^b\|_{L_x^{r_1}(A)}^2\||u|^p\|_{L_x^{r_2}}(\||x|^{-1}|u|^{p-1}\|_{L_x^{r_3}}+\||u|^{p-2}\nabla u\|_{L_x^{r_3}}),
  \end{align*}
  with the following scaling relation:
  $$1+\frac{\alpha}{N}=\frac{N-2}{2N}+\frac{2}{r_1}+\frac{1}{r_2}+\frac{1}{r_3}.$$
  And, we choose $l$ such that
  \begin{align*}
    \frac2{r_1}+\frac{2b}{N}=l\ \ \text{and}\ \ l:=
    \begin{cases}
   &\frac{\theta(1-s_c)}{N},\ \ \text{if}\  A=B,\\
   &-\frac{\theta s_c}{N},\ \ \ \ \text{if}\  A=B^c.
 \end{cases}
  \end{align*}
 Since $1<\frac{2N}{N+2+4b+2\alpha}<N$, if we choose $\theta$ small enough, we conclude that $\||x|^{b}\|_{L_x^{L^{r_1}}(A)}<\infty$ and that $1<r_2,r_3<N$. Hence, by Hardy's inequality, we have
  $$\||x|^{-1}f\|_{L_x^{r_2}}\leq \|\nabla f\|_{L_x^{r_2}}.$$
  Now, by splitting
  $$\frac1{r_2}+\frac1{r_3}=\left(\theta\left(\frac12-\frac{s_c}{N}\right)-l\right)\left(\frac1{r_4}\right)+\frac{2(p-1)-\theta}{\bar{r}}+\frac1{\bar{r}},$$
  $$\frac1{r_3}=\frac{p-1}{\bar{r}},$$
we can get $2\leq \theta r_4\leq 2N/(N-2)$. Thus, using H\"older's inequality and Sobolev inequality
\begin{align}
  \|\nabla(I_{\alpha}\ast|\cdot|^b|u|^p)|x|^b|u|^{p-2}u\|_{L_t^2L_x^{\frac{2N}{N+2}}}\lesssim \|u\|_{L_t^\infty H_x^1}^\theta\|u\|_{L_t^{\bar{a}}L_x^{\bar{r}}}^{2(p-1)-\theta}\|\nabla u\|_{L_t^{\bar{a}}L_x^{\bar{r}}},
\end{align}
which completes lemma.
\end{proof}
\subsection{Scattering criterion}To show Theorem \ref{theorem}, we first establish a scattering criterion by following that in \cite{Mu}.
\begin{lemma}[Scattering Criterion]
Let $b,\alpha, p$ satisfy the condition of Theorem \ref{theorem}. Suppose $u$ is the global  solution to \eqref{INLC} satisfying
$$\|u\|_{L_t^\infty H_x^1}\leq E.$$
There exists $\epsilon=\epsilon(E)>0$ and $R=R(E)>0$ such that if
\begin{align}\label{scattering-condition}
\liminf_{t\to\infty}\int_{|x|\leq R}|u(t,x)|^2dx\leq \epsilon^2,
\end{align}
then $u$ scatters forward in time.
\end{lemma}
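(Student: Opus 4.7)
The plan is to show $\|u\|_{S(\dot H^{s_c};[T,\infty))}<\infty$ for some large $T$; once this Strichartz bound is in hand, Lemma~\ref{Non-e1}(ii)--(iii) applied to the Duhamel integral $\int_T^\infty e^{-is\Delta}F(u(s))\,ds$ produce a scattering state $u_+\in H^1$ and the convergence (\ref{equ:gsusc}). From Strichartz and Lemma~\ref{Non-e1}(i) applied on $[T,T+\tau]$, the quantity $X(\tau):=\|u\|_{S(\dot H^{s_c};[T,T+\tau])}$ satisfies the bootstrap inequality
\[
X(\tau)\leq \bigl\|e^{i(t-T)\Delta}u(T)\bigr\|_{S(\dot H^{s_c};[T,\infty))}+C\,E^{\theta}\,X(\tau)^{2p-1-\theta},
\]
which closes by a continuity argument in $\tau$ as soon as the linear driving term sits below an $E$-dependent threshold $\delta_0(E)$.

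To produce that smallness, I would use (\ref{scattering-condition}) to pick $T$ large with $\int_{|x|\leq R}|u(T,x)|^2\,dx\leq 2\epsilon^2$ and decompose $u(T)=v+w$, where $v=\chi_R u(T)$, $w=(1-\chi_R)u(T)$, and $\chi_R$ is a smooth cutoff to $\{|x|\leq R\}$. Interpolation gives
\[
\|v\|_{\dot H^{s_c}}\lesssim \|v\|_{L^2}^{1-s_c}\|\nabla v\|_{L^2}^{s_c}\lesssim \epsilon^{1-s_c}E^{s_c},
\]
so by Strichartz the linear evolution of $v$ contributes an $\epsilon$-small piece. The tail $w$ is only $H^1$-bounded, not $\dot H^{s_c}$-small, but following Murphy~\cite{Mu} the key is that $\operatorname{supp}(w)\subset\{|x|>R\}$, where the weight $|x|^b$ ($b<0$) is pointwise at most $R^b$; rerunning the proof of Lemma~\ref{Non-e1}(i) with $\||x|^b\|_{L^\mu}$ replaced by $\||x|^b\|_{L^\mu(\{|x|>R\})}$ generates an extra factor $R^{\kappa b}$ for some $\kappa>0$. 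A perturbation argument around the reference linear solution $z(t):=e^{i(t-T)\Delta}w$ then converts that tail smallness into smallness of $u-z$ in $S(\dot H^{s_c};[T,\infty))$, which combines with the $v$-piece to keep the driving term below $\delta_0(E)$.

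The main obstacle is orchestrating $R,\epsilon,T$ so that both smallness mechanisms close a single continuity argument. I would choose them in order: first $R=R(E)$ large so that $R^{\kappa b}$ dominates the $E$-dependent constants from the tail estimate; then $\epsilon=\epsilon(E)$ small so that $\epsilon^{1-s_c}E^{s_c}<\delta_0(E)/2$; and only then $T$ via (\ref{scattering-condition}). A secondary technical point is verifying that the localized variant of Lemma~\ref{Non-e1}(i) genuinely gains a positive power of $R^{-1}$: this uses the condition $N+4b+2\alpha>0$ from (\ref{condition}), which provides strict integrability of $|x|^b$ at infinity in the $L^\mu$ norms appearing in that proof and leaves enough slack in the Hölder exponents to extract decay by sacrificing a small amount of the parameter $\theta$.
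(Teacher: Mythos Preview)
Your bootstrap reduction to smallness of $\|e^{i(t-T)\Delta}u(T)\|_{S(\dot H^{s_c};[T,\infty))}$ is correct, and your estimate for the near-origin piece $v=\chi_R u(T)$ is fine. The gap is in the treatment of the tail $w=(1-\chi_R)u(T)$. You claim that because $\operatorname{supp} w\subset\{|x|>R\}$, one may rerun Lemma~\ref{Non-e1}(i) with $\||x|^b\|_{L^\mu(\{|x|>R\})}$ in place of $\||x|^b\|_{L^\mu}$ and gain a factor $R^{\kappa b}$. But the nonlinear estimate is applied to $F(z)$ (or to $F(u)-F(z)$) with $z(t)=e^{i(t-T)\Delta}w$, and the Schr\"odinger propagator instantly spreads $w$ over all of $\R^N$: for $t>T$ the function $z(t,\cdot)$ has no localization whatsoever to $\{|x|>R\}$, so the weight $|x|^b$ in $F(z)$ cannot be restricted to large radii. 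Consequently $z$ is only $H^1$-bounded, $\|z\|_{S(\dot H^{s_c})}\lesssim E$ with no smallness, $F(z)$ carries no $R^b$-gain, and the perturbation argument around $z$ does not close. The data-level spatial splitting $u(T)=v+w$ therefore does not yield the required smallness.

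The paper avoids this by never splitting the data. Instead it writes, via a second application of Duhamel backwards in time,
\[
e^{i(t-T_0)\Delta}u(T_0)=e^{it\Delta}u_0-i\int_{I_1}e^{i(t-s)\Delta}F(u(s))\,ds-i\int_{I_2}e^{i(t-s)\Delta}F(u(s))\,ds,
\]
with $I_1=[0,T_0-\epsilon^{-\eta}]$ and $I_2=[T_0-\epsilon^{-\eta},T_0]$. The $I_1$-piece is small by the \emph{dispersive} estimate, because of the time gap $|t-s|\geq\epsilon^{-\eta}$. On the short recent interval $I_2$ one splits the \emph{nonlinearity} (not the data) as $\chi_R F(u)+(1-\chi_R)F(u)$; now the weight $|x|^b$ in the second term genuinely sits on $\{|x|>R\}$, yielding the $R^b$-gain, while the first term is small because the mass inside $B_R$ stays $\lesssim\epsilon$ throughout $I_2$ (using $\partial_t\int\chi_R|u|^2=O(R^{-1})$ and $|I_2|=\epsilon^{-\eta}$). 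The essential missing idea in your proposal is this time decomposition together with the dispersive estimate for the far-past piece; the spatial decay of $|x|^b$ is exploited on the nonlinearity over a short time window, not on freely evolved data.
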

\begin{proof}
By Duhamel's formula, we have
  $$u=e^{i(t-T_0)\Delta}u(T_0)+i\int_{T_0}^{t}e^{i(t-s)\Delta}F(u(s))ds$$
  where $F(u)=(I_{\alpha}\ast|\cdot|^b|u|^p)|x|^b|u|^{p-2}u.$

  By Lemma \ref{Non-e1} and continuity argument, we need to show
  $$\|e^{i(t-T_0)\Delta}u(T_0)\|_{L_t^{\hat{a}}L_x^{\hat{r}}([T_0,\infty))}\ll 1.$$
  where the exponent $(\hat{a},\hat{r})$ is as in Lemma \ref{Non-e1}.

  Next, we use the Duhamel formula to write
  \begin{align*}
  e^{i(t-T_{0})\Delta}u(T_{0})=e^{it\Delta}u_0-iG_{1}(t)-iG_{2}(t),
\end{align*}
where
\begin{align*}
  G_{j}(t):=\int_{I_{j}}e^{i(t-s)\Delta}F(u(s))ds, ~j=1,2,
\end{align*}
here $I_{1}=[0,T_{0}-\epsilon ^{-\eta}], I_{2}=[T_{0}-\epsilon ^{-\eta},T_{0}].$

Choosing $T_0$ large enough, we have
$$
  \|e^{it\Delta}u_0\|_{L_t^{\hat{a}}L_x^{\hat{r}}([T_0,\infty))} \ll1.
$$
 It remains to show
\begin{align*}
  \|G_{j}(t)\|_{L_t^{\hat{a}}L_x^{\hat{r}}([T_0,\infty))} \ll 1,\quad \text{~ for ~} j=1,2.
\end{align*}

\textbf{Estimation  of $G_{1}(t)$}: We may use the dispersive estimate, Hardy-Littlewood-Sobolev's inequality and H\"older's inequality, we have
\begin{align*}
  \left\|G_1(t)\right\|_{L_t^{q_0}L_x^{\hat{r}}}&\lesssim \left(\int_{T_0}^{\infty}\left(\int_{I_1}|t-s|^{-\frac{2}{q_0}-1}\|u\|_{H_x^1}^{2p-1}ds\right)^{q_0}dt\right)^{\frac1{q_0}}\\
  &\lesssim_E \left(\int_{T_0}^{\infty}|t-T_0+\epsilon^{-\theta}|^{-2}dsdt\right)^{\frac1{q_0}}\\
  &\lesssim_E \epsilon^{\frac{\eta}{q_0}},
\end{align*}
where let $q_0$ satisfy $(q_0,\hat{r})\in \Lambda_1$.

On the other hand, we may rewrite $ G_1$ as
\begin{equation}\label{F1}
G_1(t)=e^{i(t-T_0+\epsilon^{-\eta})\Delta}u(T_0-\epsilon^{-\eta})-e^{it\Delta}u_0.
\end{equation}
By $(\hat{q},\hat{r})\in \Lambda_{0}$, then
$$\frac1{\hat{a}}=\frac{1-s_c}{\hat{q}}+\frac{s_c}{q_0}.$$
Using Strichartz estimates and \eqref{F1},
 we have
$$\|G_1(t)\|_{L_t^{q_0}L_x^{\hat{r}}}\lesssim 1.$$
Thus, by interpolation, we get
$$\|G_1(t)\|_{L_t^{\hat{a}}L_x^{\hat{r}}([T_0,\infty))}\lesssim \epsilon^{\frac{\eta s_c}{q_0}}.$$
\textbf{Estimation  of $G_{2}(t)$}: By Strichartz's estimates, we have
\begin{align*}
 \|G_2(t)\|_{L_t^{\hat{q}}L_x^{\hat{r}}([T_0,\infty))}\lesssim & \|F(u)\|_{L_t^{\tilde{a}'}L_x^{\hat{r}'}(I_2)}\\
 \lesssim&\|\chi_RF(u)\|_{L_t^{\tilde{a}'}L_x^{\hat{r}'}(I_2)}+\|(1-\chi_R)F(u)\|_{L_t^{\tilde{a}'}L_x^{\hat{r}'}(I_2)},
\end{align*}
where $\chi_R$ is a smooth cutoff to $\{x:\;|x|\leq R\}.$
Using  Hardy-Littlewood-Sobolev's inequality and H\"older's inequality, we get
$$\|\chi_RF(u)\|_{L^{\hat{r}'}}\lesssim \|u\|_{H^1}^{2(p-1)}\|\chi_Ru\|_{L^{\hat{r}}}\lesssim_E\|\chi_Ru\|_{L^2}^{\beta},$$
where $\beta$ satisfies $1/\hat{r}=\beta/2+(1-\beta)/2^*$ with $2^\ast=\tfrac{2N}{N-2}.$ On the other hand, we have
\begin{align*}
 \|(1-\chi_R)F(u)\|_{L^{\hat{r}'}}\lesssim&\||x|^b\|_{L^{\mu_1}(A)}\||x|^b\|_{L^{\mu_2}(|x|\geq R)}\|u\|_{L^{\theta r_1}}^{\theta}\|u\|_{L^{\hat{r}}}^{2p-1-\theta},
\end{align*}
 where the exponents have the scaling relation
  $$\frac{N}{\mu_1}+\frac{N}{\mu_2}+2b=\frac{\theta(2+\alpha+2b)}{2(p-1)}-\frac{N}{r_1}.$$
Since $\frac{\theta(N-2)}2<\frac{\theta(2+\alpha+2b)}{2(p-1)}<\frac{\theta N}2$, there exists $\mu_2>\frac{N}{-b}$, if $A=B_1(0)$ we can choose $\theta r_1=\frac{2N}{N-2}$, or else $A=B_1^c(0)$ we can choose $\theta r_1=2$. In both case, we have $H^1\subset L^{\theta r_1}$ and $|x|^b\in L^{\mu_1}(A).$
Thus, we have
$$\|(1-\chi_R)F(u)\|_{L^{\hat{r}'}}\lesssim R^{\mu_2b+N}\|u\|_{H^1}^{2p-1}.$$

Let $T_0$ be large enough. By the assumption \eqref{scattering-condition} and identity $\pa_t|u|^2=-2\nabla\cdot Im(\bar{u}\nabla u)$, together with integration by parts and Cauchy-Schwartz, we deduce
$$\left|\pa_t\int_{I_2}\chi_R|u|^2ds\right|\lesssim \frac{1}{R}.$$
Choosing $R\geq \max\{\epsilon^{-(2+\theta)},\epsilon^{\frac1{N+\mu_2b}}\}$, we find
$$\|\chi_Ru\|_{L_t^\infty L_x^2(I_2\times \R^N)}\lesssim \epsilon,$$
and
$\|F(u)\|_{L^{\hat{r}'}}\lesssim_E \epsilon^{\beta}.$

Then, let $\eta=\frac{\hat{a}'\beta}2$, we bound
\begin{align*}
  \|G_2(t)\|_{L_t^{\hat{q}}L_x^{\hat{r}}([T_0,\infty))}\lesssim&
  \|F(u)\|_{L_t^{\tilde{a}'}L_x^{\hat{r}'}(I_2)}\\
  \lesssim& |I_2|^{1/\hat{a}'}\epsilon^{\beta}\\
  \lesssim&\epsilon^{\frac{\beta}2},
\end{align*}
which completes the proof of lemma.
\end{proof}
\subsection{ Local theory and Variational analysis}The local theory for \eqref{INLC} is standard via Strichartz estimates and the fixed point argument. For any $u_0\in H^1(\R^N)$, there exists a unique maximal-lifespan solution $u$. This solution belongs to $C_tH_x^1(I_{\max}\times \R^N)$ and conserves the mass and energy. Because the nonlinear term is $H^1$-subcritical, we have an $H^1$ blow-up criterion. In particular, if $u$ is uniformly bounded in $H^1$, then it is global. For more details, we refer the reader to \cite{AS,Sa}.

\noindent

We briefly review some of the variational analysis related to the ground state $Q$. For more details, see \cite{AS,Sa}.

The ground state $Q$ optimizes the sharp Gagliardo-Nirenberg inequality:
$$\int_{\R^N}(I_\alpha\ast|\cdot|^b|u|^p)|x|^b|u|^pdx\leq C_0\|u\|_{L^2_x}^A\|\nabla u\|_{L_x^2}^B,$$
where $B:=Np-N-\alpha-2b=2(p-1)s_c+2$, $A:=2p-B=2(p-1)s_c$. And $Q$ satisfies $\|\nabla Q\|^{2}=\frac{B}{A}\|Q\|^{2}$,
$$\int_{\R^N}(I_\alpha\ast|\cdot|^b|Q|^p)|x|^b|Q|^pdx=\frac{2p}{B}\|\nabla Q\|^2,$$
thus $$\frac{2p}{B}\leq C_0\|u\|_{L^2_x}^{2(p-1)(1-s_c)}\|\nabla u\|_{L^2_x}^{2(p-1)s_c}.$$

In the spirit of Dodson-Murphy \cite{DM-2017-PAMS, DM}, we can obtain the following coercivity, which can be founded in \cite{Sa2}.
\begin{lemma}[Coercivity I]\label{CoeI}
Suppose that $u_0\in H^1(\R^N)$ satisfies
  $$M(u_0)^{1-s_c}E(u_0)^{s_c}<(1-\delta)M(Q)^{1-s_c}E(Q)^{s_c}\quad \text{and}\quad \|u_0\|_{L_x^2}^{1-s_c}\|\nabla u_0\|_{L_x^2}^{s_c}<\|Q\|_{L_x^2}^{1-s_c}\|\nabla Q\|_{L_x^2}^{s_c}.$$ Then, there exists $\delta'=\delta'(\delta)>0$ so that
$$\|u(t)\|_{L_x^2}^{1-s_c}\|\nabla u(t)\|_{L_x^2}^{1-s_c}<(1-\delta')\|Q\|_{L_x^2}^{1-s_c}\|\nabla Q\|_{L_x^2}^{1-s_c}$$
for all $t\in I$, where $u:I\times\R^N\rightarrow\mathbb{C}$ is the maximal-lifespan solution to \eqref{INLC}. In particular, $I=\R$ and $u$ is uniformly bounded in $H^1$.
\end{lemma}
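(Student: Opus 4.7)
The plan is a standard energy/norm trapping argument based on the sharp Gagliardo--Nirenberg (GN) inequality recalled just above the statement. First, I would apply the sharp GN inequality to bound the potential term in the energy,
\[
E(u(t))\geq \tfrac12\|\nabla u(t)\|_{L^2}^2-\tfrac{C_0}{p}\|u(t)\|_{L^2}^{A}\|\nabla u(t)\|_{L^2}^{B},
\]
with $A=2(p-1)(1-s_c)$ and $B=2(p-1)s_c+2$. Multiplying both sides by $M(u)^{(1-s_c)/s_c}$ and organizing exponents using the identity $\|u\|^{2(1-s_c)/s_c}\|\nabla u\|^{2}=X(u)^{2/s_c}$, where $X(u):=\|u\|_{L^2}^{1-s_c}\|\nabla u\|_{L^2}^{s_c}$, I would rewrite this as
\[
M(u)^{(1-s_c)/s_c}E(u)\geq \tfrac12 X(u)^{2/s_c}-\tfrac{C_0}{p}X(u)^{2/s_c+2(p-1)}=:g(X(u)).
\]
Using the ground-state Pohozaev relation $\int(I_\alpha\ast|\cdot|^{b}|Q|^{p})|x|^{b}|Q|^{p}\,dx=\tfrac{2p}{B}\|\nabla Q\|_{L^2}^2$ (quoted above) and the sharp constant $C_0\|Q\|^{A}\|\nabla Q\|^{B-2}=2p/B$, one checks that $g(X(Q))=M(Q)^{(1-s_c)/s_c}E(Q)$. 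Raising to the $s_c$-th power yields the key scalar inequality
\[
M(u(t))^{1-s_c}E(u(t))^{s_c}\geq \Phi(X(u(t))),
\]
where $\Phi$ is continuous and satisfies $\Phi(X(Q))=M(Q)^{1-s_c}E(Q)^{s_c}$.

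Next, I would run a bootstrap/continuity argument. By conservation of mass and energy, the left-hand side equals $M(u_0)^{1-s_c}E(u_0)^{s_c}<(1-\delta)M(Q)^{1-s_c}E(Q)^{s_c}$. Examining $g$ (hence $\Phi$) as a one-variable function on $[0,\infty)$, a direct derivative computation shows that the sublevel set
$\{X\ge 0:\Phi(X)<(1-\delta)\Phi(X(Q))\}$
splits into two open components $[0,X_-(\delta))$ and $(X_+(\delta),\infty)$ separated by a neighborhood of $X(Q)$, with $X_-(\delta)<X(Q)<X_+(\delta)$ and $X_-(\delta)$ strictly less than $X(Q)$ whenever $\delta>0$. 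The hypothesis $X(u_0)<X(Q)$ together with $\Phi(X(u_0))\le M(u_0)^{1-s_c}E(u_0)^{s_c}<(1-\delta)\Phi(X(Q))$ forces $X(u_0)\in[0,X_-(\delta))$. Since $u\in C(I;H^1)$, the map $t\mapsto X(u(t))$ is continuous, and the constraint $\Phi(X(u(t)))<(1-\delta)\Phi(X(Q))$ (valid on all of $I$) keeps $X(u(t))$ in the connected component $[0,X_-(\delta))$ of its initial value.

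Setting $\delta':=1-X_-(\delta)/X(Q)>0$, I obtain $X(u(t))\le(1-\delta')X(Q)$ for all $t\in I$, which is the desired pointwise bound. Mass conservation then pins down $\|u(t)\|_{L^2}=\|u_0\|_{L^2}$, and the scale-invariant bound on $X(u(t))$ gives a uniform bound on $\|\nabla u(t)\|_{L^2}$. Hence $u$ stays uniformly bounded in $H^1$, and the $H^1$-subcritical blow-up alternative quoted immediately before the lemma forces $I=\mathbb{R}$.

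The only genuinely delicate step is the structural claim about the sublevel sets of $\Phi$, i.e.\ verifying from the Pohozaev identities that $X(Q)$ separates the two components for all small $\delta$; I would expect to do this by an explicit derivative/convexity inspection of $g(X)=\tfrac12 X^{2/s_c}-\tfrac{C_0}{p}X^{2/s_c+2(p-1)}$ at and beyond $X=X(Q)$. Everything else (GN, exponent bookkeeping, conservation, continuity) is routine.
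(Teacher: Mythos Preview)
The paper does not supply its own proof of this lemma; it simply records it ``in the spirit of Dodson--Murphy \cite{DM-2017-PAMS,DM}'' and refers the reader to \cite{Sa2}. Your outline is exactly the standard energy-trapping argument from those references: bound $M(u)^{(1-s_c)/s_c}E(u)$ from below via the sharp Gagliardo--Nirenberg inequality by a one-variable function $g$ of the scale-invariant quantity $X(u)=\|u\|_{L^2}^{1-s_c}\|\nabla u\|_{L^2}^{s_c}$, check from the Pohozaev relations that $g$ has its unique positive critical point at $X(Q)$ with $g(X(Q))=M(Q)^{(1-s_c)/s_c}E(Q)$, and then use conservation of mass and energy together with continuity of $t\mapsto X(u(t))$ to trap $X(u(t))$ below $X_-(\delta)<X(Q)$. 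Your exponent bookkeeping is correct, and the structural claim about the sublevel sets follows immediately from the observation that $g'(X)=\tfrac{1}{s_c}X^{2/s_c-1}\bigl(1-\tfrac{C_0 B}{2p}X^{2(p-1)}\bigr)$ vanishes precisely at $X=X(Q)$.

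Two minor remarks. First, the step ``raising to the $s_c$-th power'' is unnecessary and slightly awkward (it presupposes $E(u(t))\ge 0$, which is what you are ultimately proving); it is cleaner to run the continuity argument directly on the inequality $M(u)^{(1-s_c)/s_c}E(u)\ge g(X(u))$ and compare with the conserved value $M(u_0)^{(1-s_c)/s_c}E(u_0)<(1-\delta)^{1/s_c}M(Q)^{(1-s_c)/s_c}E(Q)$. Second, the lemma as printed in the paper contains a typo in the conclusion (the exponents on $\|\nabla u(t)\|_{L^2}$ and $\|\nabla Q\|_{L^2}$ should be $s_c$, not $1-s_c$); you have implicitly used the correct exponents.
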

\begin{lemma}[Coercivity II] \label{Co-II}
Suppose $\|f\|_{L_x^2}^{1-s_c}\|\nabla f\|_{L_x^2}^{s_c}\leq(1-\delta)\|Q\|_{L_x^2}^{1-s_c}\|\nabla Q\|_{L_x^2}^{s_c}$. Then there exists $\delta'>0$ so that
$$\int_{\R^N}|\nabla f|^2dx-\frac{B}{2p}\int_{\R^N}(I_\alpha\ast|\cdot|^b|u|^p)|x|^b|u|^pdx\geq \delta' \int_{\R^N}|\nabla f|^2dx. $$
\end{lemma}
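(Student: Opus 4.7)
The plan is to deduce this directly from the sharp Gagliardo--Nirenberg inequality recalled just above, by rewriting the ratio between the potential energy of $f$ and $\|\nabla f\|_{L^2}^2$ entirely in terms of the scaling-invariant quantity $\|f\|_{L^2}^{1-s_c}\|\nabla f\|_{L^2}^{s_c}$ and its analogue for $Q$.

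First, I would apply the sharp Gagliardo--Nirenberg inequality to $f$ to obtain
\begin{equation*}
\int_{\R^N}(I_\alpha\ast|\cdot|^b|f|^p)|x|^b|f|^p\,dx\leq C_0\|f\|_{L^2}^{A}\|\nabla f\|_{L^2}^{B},
\end{equation*}
and then use the Pohozaev-type identity for the ground state, $\int(I_\alpha\ast|\cdot|^b|Q|^p)|x|^b|Q|^p\,dx=\tfrac{2p}{B}\|\nabla Q\|_{L^2}^2$, together with the fact that $Q$ saturates Gagliardo--Nirenberg, to identify
\begin{equation*}
C_0=\frac{2p}{B}\,\|Q\|_{L^2}^{-A}\|\nabla Q\|_{L^2}^{-(B-2)}.
\end{equation*}

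Next, I would use the algebraic identities $A=2(p-1)(1-s_c)$ and $B-2=2(p-1)s_c$ recorded in the paragraph preceding Lemma~\ref{CoeI}, which give the clean factorization
\begin{equation*}
\|f\|_{L^2}^{A}\|\nabla f\|_{L^2}^{B-2}=\bigl(\|f\|_{L^2}^{1-s_c}\|\nabla f\|_{L^2}^{s_c}\bigr)^{2(p-1)},
\end{equation*}
and similarly for $Q$. Combining the previous two steps yields
\begin{equation*}
\frac{B}{2p}\int_{\R^N}(I_\alpha\ast|\cdot|^b|f|^p)|x|^b|f|^p\,dx
\leq\left(\frac{\|f\|_{L^2}^{1-s_c}\|\nabla f\|_{L^2}^{s_c}}{\|Q\|_{L^2}^{1-s_c}\|\nabla Q\|_{L^2}^{s_c}}\right)^{2(p-1)}\|\nabla f\|_{L^2}^2.
\end{equation*}

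Inserting the hypothesis bound $(1-\delta)$ into the ratio and subtracting from $\|\nabla f\|_{L^2}^2$ produces the conclusion with the explicit constant
\begin{equation*}
\delta':=1-(1-\delta)^{2(p-1)}>0.
\end{equation*}
There is no real obstacle here; the argument is a pure book-keeping of exponents, and the only point requiring slight care is that the scaling invariant combination $\|\cdot\|_{L^2}^{1-s_c}\|\nabla\cdot\|_{L^2}^{s_c}$ matches the powers $A$ and $B-2$ produced by Gagliardo--Nirenberg, which is precisely guaranteed by the definitions of $A$, $B$ and $s_c$.
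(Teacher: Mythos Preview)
Your argument is correct and is essentially the same as the paper's: both rest on the sharp Gagliardo--Nirenberg inequality, the identification $C_0=\tfrac{2p}{B}\|Q\|_{L^2}^{-A}\|\nabla Q\|_{L^2}^{-(B-2)}$, and the exponent relations $A=2(p-1)(1-s_c)$, $B-2=2(p-1)s_c$, leading to the same explicit constant $\delta'=1-(1-\delta)^{2(p-1)}$. The only cosmetic difference is that the paper first rewrites the left-hand side as a linear combination of $E(f)$ and $\|\nabla f\|_{L^2}^2$ and then bounds $E(f)$ below via Gagliardo--Nirenberg, whereas you bound the potential term directly; your route is slightly cleaner and avoids that intermediate step.
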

\begin{proof}
  Using the identity
  $$\int_{\R^N}|\nabla f|^2dx-\frac{B}{2p}\int_{\R^N}(I_\alpha\ast|\cdot|^b|u|^p)|x|^b|u|^pdx=\frac{B}{2}E(f)-\frac{B-2}{2}\|\nabla f\|_{L_x^2}^2.$$
  By the sharp Gagliardo-Nirenberg inequality
  \begin{align*}
   E(f)&\geq \|\nabla f\|_{L_x^2}\left[1-\frac{C_0}{p}\|f\|_{L_x^2}^{A}\|\nabla f\|_{L_x^2}^{B-2}\right]\\
    &\geq \|\nabla f\|_{L_x^2}^2\left[1-\frac{(1-\delta)C_0}{p}\|Q\|_{L_x^2}^{A}\|\nabla Q\|_{L_x^2}^{B-2}\right]\\
   &\geq \frac{B-2(1-\delta)^{2(p-1)}}{B}\|\nabla f\|_{L_x^2}^2.
 \end{align*}
  Thus
  $$\int_{\R^N}|\nabla f|^2dx-\frac{B}{2p}\int_{\R^N}(I_\alpha\ast|\cdot|^b|u|^p)|x|^b|u|^pdx\geq \delta'\|\nabla f\|_{L_x^2}^2.$$
\end{proof}

\section{Proof of Theorem 1.1}

\noindent

In this section, we turn to prove Theorem \ref{theorem}. Assume that $u$ is a solution to \eqref{INLC} satisfying the hypothesis of Theorem \ref{theorem}. It follows from Lemma \ref{CoeI} that $u$ is global, and
$$\sup_{t\in\R}\|u(t)\|_{L_x^2}^{1-s_c}\|\nabla u(t)\|_{L_x^2}^{1-s_c}<(1-\delta')\|Q\|_{L_x^2}^{1-s_c}\|\nabla Q\|_{L_x^2}^{1-s_c}.$$

 First, we need a lemma that gives Lemma \ref{Co-II} on large balls, so that we can exhibit the necessary coercivity. Let $\chi(x)$ be radial smooth function such that
 \begin{equation*}
  \chi(x)=\left\{
    \begin{aligned}
    &1,\ \  |x|\leq \frac12,\\
    &0,\ \  |x|>1.
 \end{aligned}\right.
 \end{equation*}
 Set $\chi_R(x):=\chi(\frac xR)$ for $R>0$.
 \begin{lemma}[Coercivity on ball, \cite{Sa2}]
   There exists $R=R(\delta, M(u), Q)>0$ sufficiently large that
   $$\sup_{t\in\R}\|\chi_R u\|_{L_x^2}^{1-s_c}\|\nabla(\chi_Ru)\|_{L_x^2}^{s_c}\leq (1-\delta)\|Q\|_{L_x^2}^{1-s_c}\|\nabla Q\|_{L_x^2}^{s_c}.$$
   In particular, by Lemma \ref{Co-II}, there exits $\delta'>0$ such that
$$\int_{\R^N}|\nabla(\chi_Rf)|^2dx-\frac{B}{2p}\int_{\R^N}(I_\alpha\ast|\cdot|^b|\chi_Ru|^p)|x|^b|\chi_Ru|^pdx\geq \delta' \int_{\R^N}|\nabla(\chi_Rf)|^2dx. $$
 \end{lemma}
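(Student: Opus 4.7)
The strategy is to show that truncating $u$ by the smooth cutoff $\chi_R$ perturbs the $L^2$ and $\dot H^1$ norms by amounts that vanish as $R\to\infty$, so that the subcritical Gagliardo--Nirenberg inequality from Lemma \ref{CoeI} is inherited by $\chi_R u$ with a slightly worse constant. The coercivity conclusion is then immediate from Lemma \ref{Co-II}.

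\textbf{Step 1: exploit the strict gap from Lemma \ref{CoeI}.} Because the hypotheses of Theorem \ref{theorem} are strict, there exists $\delta_0=\delta_0(u_0,Q)>0$ with $M(u_0)^{1-s_c}E(u_0)^{s_c}\le (1-\delta_0)M(Q)^{1-s_c}E(Q)^{s_c}$. Lemma \ref{CoeI} then yields $\delta_0'>0$ such that
$$\sup_{t\in\R}\|u(t)\|_{L^2_x}^{1-s_c}\|\nabla u(t)\|_{L^2_x}^{s_c}\le(1-\delta_0')\|Q\|_{L^2_x}^{1-s_c}\|\nabla Q\|_{L^2_x}^{s_c}.$$
I will establish the lemma for any $\delta\in(0,\delta_0')$; the reader can take $\delta$ to be any fixed value in this range (e.g.\ $\delta_0'/2$).

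\textbf{Step 2: cutoff estimates.} Since $0\le\chi_R\le 1$, one has $\|\chi_R u\|_{L^2_x}\le\|u\|_{L^2_x}=M(u)^{1/2}$, uniform in $t$. For the gradient, expand
$$\nabla(\chi_R u)=\chi_R\nabla u+(\nabla\chi_R)u,$$
and use $\|\nabla\chi_R\|_{L^\infty_x}\lesssim R^{-1}$ together with mass conservation to obtain
$$\|\nabla(\chi_R u)\|_{L^2_x}\le\|\nabla u\|_{L^2_x}+\frac{C}{R}M(u)^{1/2}.$$
Since $s_c\in(0,1)$, the subadditivity $(a+b)^{s_c}\le a^{s_c}+b^{s_c}$ then gives
$$\|\nabla(\chi_R u)\|_{L^2_x}^{s_c}\le\|\nabla u\|_{L^2_x}^{s_c}+\Bigl(\frac{C}{R}\Bigr)^{s_c}M(u)^{s_c/2}.$$

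\textbf{Step 3: combine.} Multiplying by $\|\chi_R u\|_{L^2_x}^{1-s_c}\le M(u)^{(1-s_c)/2}$ and invoking Step 1,
\begin{align*}
\|\chi_R u\|_{L^2_x}^{1-s_c}\|\nabla(\chi_R u)\|_{L^2_x}^{s_c}
&\le\|u\|_{L^2_x}^{1-s_c}\|\nabla u\|_{L^2_x}^{s_c}+\frac{C'M(u)^{1/2}}{R^{s_c}}\\
&\le(1-\delta_0')\|Q\|_{L^2_x}^{1-s_c}\|\nabla Q\|_{L^2_x}^{s_c}+\frac{C'M(u)^{1/2}}{R^{s_c}}.
\end{align*}
Choosing $R=R(\delta,M(u),Q)$ sufficiently large that $C'M(u)^{1/2}R^{-s_c}\le(\delta_0'-\delta)\|Q\|_{L^2_x}^{1-s_c}\|\nabla Q\|_{L^2_x}^{s_c}$ gives the first conclusion uniformly in $t$.

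\textbf{Step 4: transfer to coercivity.} With the first conclusion established, apply Lemma \ref{Co-II} pointwise in $t$ to $f=\chi_R u(t)$ to obtain the second displayed inequality. No further work is needed.

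\textbf{Expected difficulty.} The proof is essentially a uniform cutoff argument; the only subtlety is that the required $R$ must depend on $M(u)$ and on the chosen $\delta$, but not on $t$, which is ensured by mass conservation. A minor bookkeeping issue is the relabeling between the abstract $\delta$ in the Lemma statement and the $\delta_0'$ produced by Lemma \ref{CoeI} from the strict subthreshold hypothesis; this is handled in Step 1.
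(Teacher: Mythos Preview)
Your argument is correct. Note, however, that the paper does not actually supply a proof of this lemma; it simply cites \cite{Sa2}, so there is no in-paper proof to compare against line by line. Your cutoff argument is the standard one in the Dodson--Murphy lineage and is exactly what is carried out in \cite{Sa2} (and in \cite{DM,Mu}).

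One minor remark on technique: instead of the triangle inequality $\|\nabla(\chi_R u)\|_{L^2}\le\|\nabla u\|_{L^2}+CR^{-1}M(u)^{1/2}$, the identity
\[
\int_{\R^N}|\nabla(\chi_R u)|^2\,dx=\int_{\R^N}\chi_R^2|\nabla u|^2\,dx-\int_{\R^N}\chi_R\Delta(\chi_R)|u|^2\,dx
\]
(which the paper itself records and uses later in the Morawetz proposition) gives the sharper bound $\|\nabla(\chi_R u)\|_{L^2}^2\le\|\nabla u\|_{L^2}^2+CR^{-2}M(u)$, hence an $O(R^{-2})$ error rather than your $O(R^{-1})$. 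Either estimate is sufficient here since you only need the error to vanish as $R\to\infty$, so this does not affect the validity of your proof. Your handling of the bookkeeping between the abstract $\delta$ in the statement and the $\delta_0'$ produced by Lemma~\ref{CoeI} is also fine.
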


Let $R\gg 1$ to be chosen later. We take $a(x)$ to be a radial function satisfying
\begin{eqnarray}\label{weight-function}
a(x)=
\begin{cases}
|x|^2;& |x|\leq R\\
3R|x|;& |x|>2R,
\end{cases}
\end{eqnarray}
and when $R<|x|\leq 2R$, there holds
\begin{align*}
  \partial_{r}a\geq 0,\partial_{rr}a\geq 0\quad and \quad |\partial^{\alpha}a| \lesssim R|x|^{-|\alpha|+1}.
\end{align*}
Here $\partial_{r}$ denotes the radial derivative. Under these conditions, the matrix $(a_{jk})$ is non-negative.
It is easy to verify that
\begin{eqnarray*}
\begin{cases}
a_{jk}=2\delta_{jk},\quad \Delta a=2N,\quad \Delta \Delta a=0,& |x|\leq R,\\
a_{jk}=\frac{3R}{|x|}[\delta_{jk}-\frac{x_{j}x_{k}}{|x|^2}],\quad \Delta a=\frac{3(N-1)R}{|x|},\quad \Delta \Delta a=\frac{-3(N-1)(N-3)R}{|x|^3},& |x|>2R.
\end{cases}
\end{eqnarray*}

\begin{lemma}[Morawetz identity]
Let $a:\R^N\rightarrow\R$ be a smooth weight. Define
$$M(t)=2{\rm Im}\int_{\R^N}\bar{u}\nabla u\cdot\nabla adx.$$
Then
\begin{align*}
 \frac{d}{dt}M_a(t)=&\int_{\R^N}(-\Delta\Delta a)|u|^2dx+4\int_{\R^N}a_{jk}{\rm Re}(\pa_j\bar{u}\pa_ku)dx\\
 &-\bigg(2-\frac{4}{p}\bigg)\int_{\R^N}\Delta a|x|^b|u|^p(I_\alpha\ast|\cdot|^b|u|^p)dx\\
 &+\frac{4b}p\int_{\R^N}\nabla a\cdot\frac{x}{|x|^2}(I_\alpha\ast|\cdot|^b|u|^p)|u|^pdx\\
 &-\frac{2\mathcal{K}(N-\alpha)}{p}\int_{\R^N}\int_{\R^N}(\nabla a(x)-\nabla a(y))\cdot\frac{x-y}{|x-y|^{N-\alpha+2}}|y|^b|u|^p|x|^b|u|^pdydx.
\end{align*}
where subscripts denote partial derivatives and repeated indices are summed.
\end{lemma}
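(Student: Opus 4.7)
The plan is to differentiate $M_a(t)$ in $t$, substitute the equation $i\pa_t u = -\Delta u - F(u)$ with $F(u)=(I_\alpha\ast|\cdot|^b|u|^p)|x|^b|u|^{p-2}u$, and integrate by parts. Writing $M_a(t) = -i\int_{\R^N}(\bar u\,\nabla u - u\,\nabla\bar u)\cdot\nabla a\,dx$ and setting $G:=(I_\alpha\ast|\cdot|^b|u|^p)|x|^b|u|^{p-2}$, so that $F(u)=Gu$ and $G$ is real, the time derivative splits into a kinetic piece (from $-\Delta u$) and a nonlinear piece, the latter collapsing after routine cancellations of the form $\bar u\,\nabla(Gu) - G\bar u\,\nabla u = |u|^2\nabla G + \ldots$ into $2\int_{\R^N}|u|^2\,\nabla G\cdot\nabla a\,dx$.

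The kinetic piece is the standard Morawetz algebra: pairing $\Delta\bar u\,\nabla u$ with $\bar u\,\nabla\Delta u$ (and their conjugates) and integrating by parts twice yields exactly
\begin{align*}
\int_{\R^N}(-\Delta\Delta a)|u|^2\,dx + 4\int_{\R^N} a_{jk}\,{\rm Re}(\pa_j\bar u\,\pa_k u)\,dx,
\end{align*}
a computation that is independent of the specific form of the nonlinearity.

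For the nonlinear piece $2\int|u|^2\,\nabla G\cdot\nabla a\,dx$, one integration by parts gives $-2\int G|u|^2\Delta a\,dx - 2\int G\,\nabla|u|^2\cdot\nabla a\,dx$. The first summand equals $-2\int\Delta a\,(I_\alpha\ast|\cdot|^b|u|^p)|x|^b|u|^p\,dx$. For the second, the pointwise identity $|u|^{p-2}\nabla|u|^2 = \tfrac{2}{p}\nabla|u|^p$ rewrites it as $-\tfrac{4}{p}\int(I_\alpha\ast|\cdot|^b|u|^p)|x|^b\,\nabla|u|^p\cdot\nabla a\,dx$, after which a further integration by parts distributes the gradient across the three factors $(I_\alpha\ast|\cdot|^b|u|^p)$, $|x|^b$, and $\nabla a$. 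The $\nabla a$-piece produces $+\tfrac{4}{p}\int\Delta a\,(I_\alpha\ast|\cdot|^b|u|^p)|x|^b|u|^p\,dx$, which combines with the earlier $-2\int\Delta a\,\cdots$ term into the coefficient $-(2-\tfrac{4}{p})$; the $|x|^b$-piece, via $\nabla|x|^b = b|x|^{b-2}x$, yields the $\tfrac{4b}{p}$ term.

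The remaining piece, where $\nabla$ lands on the Riesz convolution, uses $\nabla(I_\alpha\ast f)(x) = -\mathcal K(N-\alpha)\int\frac{x-y}{|x-y|^{N-\alpha+2}}f(y)\,dy$ with $f=|\cdot|^b|u|^p$. Since the weight $|x|^b|u(x)|^p|y|^b|u(y)|^p$ is symmetric in $x\leftrightarrow y$ while the kernel $\frac{x-y}{|x-y|^{N-\alpha+2}}$ is antisymmetric, symmetrizing the resulting double integral replaces $\nabla a(x)$ by $\tfrac12(\nabla a(x)-\nabla a(y))$ and produces the final term with prefactor $-\tfrac{2\mathcal K(N-\alpha)}{p}$. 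The main obstacle is the bookkeeping involved in combining the various integration-by-parts contributions and carrying out the symmetrization cleanly; all boundary terms at infinity vanish since $u\in H^1(\R^N)$ and $a$ is smooth with at most linear growth, so the differentiation under the integral and the integrations by parts are justified.
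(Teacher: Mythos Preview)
Your proposal is correct and follows essentially the same route as the paper's own argument in the Appendix: the kinetic part is the standard Morawetz algebra (which the paper cites from \cite{Zh}), and for the nonlinear part both you and the paper reduce to an expression of the form $2\int|u|^2\nabla G\cdot\nabla a\,dx$ (equivalently $2\int\nabla a\cdot\{F,u\}_P\,dx$), integrate by parts to produce the $\Delta a$ and $\nabla\big[(I_\alpha\ast|\cdot|^b|u|^p)|x|^b\big]$ pieces, split the latter via $\nabla|x|^b=b|x|^{b-2}x$ and $\nabla I_\alpha$, and then symmetrize the resulting double integral. The only difference is presentational: the paper jumps directly to the combined $(2-\tfrac{4}{p})$ coefficient, whereas you spell out the two successive integrations by parts that produce it.
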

\begin{proposition}[Morawetz estimates]
Let $T>0$ and choosing $R=R(\delta,M(u_0),Q)$ sufficiently large, then
$$\frac1{T}\int_{0}^{T}\int_{|x|<R}|u(t,x)|^{\frac{2N}{N-2}}dxdt\lesssim_{u,\delta}\frac{R}{T}+\frac1{R^{-b}}.$$
In particular, there exists a sequence of times $t_n,R_n\rightarrow\infty$ so that
$$\lim_{n\to\infty}\int_{|x|\leq R_n}|u(t,x)|^{\frac{2N}{N-2}}dx=0.$$

\end{proposition}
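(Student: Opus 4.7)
The plan is to apply the Morawetz identity with the weight $a$ from \eqref{weight-function}, combine the inner-region contributions with the coercivity on balls, and absorb the exterior contributions using the decay factor $|x|^b$ (recall $b<0$). First, since $|\nabla a(x)|\lesssim R$ uniformly in $x$, and $\sup_t\|u(t)\|_{H^1_x}\lesssim 1$ by Lemma \ref{CoeI}, one has $|M_a(t)|\lesssim R\|u\|_{L^2}\|\nabla u\|_{L^2}\lesssim R$. Integrating the identity over $[0,T]$ thus gives
$$\int_0^T \frac{d}{dt}M_a(t)\,dt=M_a(T)-M_a(0)=O(R).$$

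Next, I analyze the integrand region by region. On $\{|x|\leq R\}$ the weight satisfies $a_{jk}=2\delta_{jk}$, $\Delta a=2N$, $\Delta\Delta a=0$, and $\nabla a=2x$; the ``inner piece'' of the nonlocal double integral (both $x,y$ in $B_R$) is computed using $\nabla a(x)-\nabla a(y)=2(x-y)$, which collapses the weight $(x-y)\cdot(x-y)/|x-y|^{N-\alpha+2}$ to $|x-y|^{-(N-\alpha)}$. A short bookkeeping via the algebraic identity
$$-\frac{4N(p-2)}{p}+\frac{8b}{p}-\frac{4(N-\alpha)}{p}=-\frac{4B}{p}$$
then shows that the combined inner-region contributions reproduce exactly
$$8\int|\nabla(\chi_R u)|^2\,dx-\frac{4B}{p}\int(I_\alpha\ast|\cdot|^b|\chi_R u|^p)|x|^b|\chi_R u|^p\,dx$$
up to commutators between $\chi_R$ and $u$, which are supported in $\{R/2\leq|x|\leq R\}$ and hence benefit from $|x|^b\lesssim R^b$. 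Applying the Coercivity on Ball lemma bounds this quantity from below by $8\delta'\int|\nabla(\chi_R u)|^2$. Since $\|\chi_R u\|_{H^1}\lesssim 1$, Sobolev embedding $\dot H^1\hookrightarrow L^{2N/(N-2)}$ gives
$$\int_{|x|\leq R/2}|u|^{\frac{2N}{N-2}}\,dx\leq\|\chi_R u\|_{L^{2N/(N-2)}}^{\frac{2N}{N-2}}\lesssim\|\nabla(\chi_R u)\|_{L^2}^2,$$
where the exchange of the exponent $2^*$ for $2$ uses the uniform $H^1$-bound on $\chi_R u$ to absorb the excess powers.

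Third, the contributions from $\{|x|>R\}$---the annulus $R<|x|\leq 2R$, the exterior $|x|>2R$, and the cross terms in the double integral where exactly one of $x,y$ lies outside $B_R$---are each controlled by the decay $|x|^b\lesssim R^b$ in the nonlinearity together with the pointwise bounds $|\partial^\alpha a|\lesssim R|x|^{-|\alpha|+1}$; on the nonlinear pieces this yields a pointwise factor of $R|x|^{b-1}\lesssim R^b$ on $\{|x|>R\}$. Using Hardy--Littlewood--Sobolev and $\|u\|_{H^1}\lesssim 1$, each such error is of order $R^b=1/R^{-b}$. Observe that on the annulus the non-negativity of $(a_{jk})$ and $\partial_r a\geq 0$ lets us drop the kinetic contribution with the correct sign, and on the exterior $-\Delta\Delta a\geq 0$ for all $N\geq 3$, so this term only helps the lower bound.

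Collecting all of the above and integrating in $t$, I obtain
$$\int_0^T\!\!\int_{|x|\leq R/2}|u(t,x)|^{\frac{2N}{N-2}}\,dx\,dt\lesssim_{u,\delta} R+T\,R^b,$$
and dividing by $T$ gives the stated bound (after rescaling $R/2$ to $R$). For the energy-evacuation consequence, pick $T_n\to\infty$ and $R_n\to\infty$ with $R_n/T_n\to 0$ (e.g.\ $R_n=T_n^{1/2}$); then both $R_n/T_n$ and $R_n^b$ tend to zero, so the time-average vanishes, and a Chebyshev-type selection produces $t_n\in[0,T_n]$ along which $\int_{|x|\leq R_n}|u(t_n,x)|^{2N/(N-2)}\,dx\to 0$. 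The main technical obstacle I expect is the careful bookkeeping of the nonlocal double integral: the cross terms with $x\in B_R$, $y\notin B_R$ (or vice versa) do not directly fit into the coercive quadratic form for $\chi_R u$ and must be separately controlled using $|y|^b\leq R^b$. This is precisely where the assumption $b<0$ carries its weight in the non-radial setting, mirroring Murphy's \cite{Mu} treatment of the local inhomogeneous NLS.
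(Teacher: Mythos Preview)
Your proposal follows the paper's proof essentially line by line: both apply the Morawetz identity with the weight \eqref{weight-function}, extract the coercive quantity $8\bigl(\|\nabla(\chi_R u)\|_{L^2}^2-\tfrac{B}{2p}\int(I_\alpha\ast|\cdot|^b|\chi_R u|^p)|x|^b|\chi_R u|^p\,dx\bigr)$ via the identity $\int\chi_R^2|\nabla u|^2=\int|\nabla(\chi_R u)|^2+\int\chi_R\Delta(\chi_R)|u|^2$, and bound the exterior nonlinear pieces by $R^b$ through Hardy--Littlewood--Sobolev. One small inaccuracy worth flagging: the kinetic commutator $\int\chi_R\Delta(\chi_R)|u|^2$ (and likewise the annulus and biharmonic mass terms) carries no factor of $|x|^b$ and is $\mathcal{O}(R^{-2})$, not $\mathcal{O}(R^b)$ as your phrasing suggests; the paper records these separately, but since condition \eqref{condition} (specifically $4+\alpha+2b-N>0$ together with $\alpha<N$) forces $b>-2$, one has $R^{-2}\lesssim R^b$ for large $R$ and your stated bound survives unchanged.
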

\begin{proof}
  Note that by Cauchy-Schwartz, the uniform $H^1$ bounds for $u$, and the choice of weight function, we have
  $$\sup_{t\in\R}|M(t)|\lesssim_u R.$$

  We compute
  \begin{align}\label{Main-1}
    \frac{d}{dt}M(t)=&8\int_{|x|<R}|\nabla u|^2-\frac{B}{2p}(I_\alpha\ast|\cdot|^b|u|^p)|x|^b|u|^pdx\\\label{Eorr-1}
    &+\int_{|x|>2R}\frac{3R(N-1)(N-3)}{|x|^3}|u|^2+\frac{9R}{|x|}|\not\!\nabla u|^2dx\\\label{Eorr-2}
    &+\int_{R<|x|<2R}4{\rm Re}a_{jk}\bar{u}_ju_k+\mathcal{O}(\frac{R}{|x|^3}|u|^2)dx\\\label{Eorr-3}
    &+\mathcal{O}\left(\int_{|x|>R}(I_\alpha\ast|\cdot|^b|u|^p)|x|^b|u|^pdx\right),
  \end{align}
  where $\not\!\!\nabla=\nabla-\frac{x}{|x|^2}x\cdot\nabla$ denotes the angular part of the derivative. In \eqref{Eorr-1}, the angular derivation term is nonnegative, while the mass term is estimated by $R^{-2}$. Similarly, in \eqref{Eorr-2} the first term is nonnegative and the second term is estimated by $R^{-2}$.
  Using Hardy-Littlewood-Sobolev's inequality, we get
  $$\eqref{Eorr-3}\lesssim_E R^b\||x|^b\|_{L^{r_1}(A)}\|u\|_{L^r}^{2p},$$
the exponents satisfy the scaling relation,
$$1+\frac{\alpha}N=\frac{1}{r_1}+\frac{1}{r}.$$
Since $p\geq2$,  for every $A\in\{B_1(0),B_1^c(0)\}$, there exists $r\in[2,\frac{2N}{N-2}]$ such that $|x|^b\in L^{r_1}(A)$. Thus, \eqref{Eorr-3} can be estimated by $R^b$.

  We will make use the following identity, which can be checked by direct computation:
\begin{align}
  \int_{\R^N}\chi_R^2|\nabla u|^2=\int_{\R^N}\Big(|\nabla(\chi_Ru)|^2+\chi_R\Delta(\chi_R)|u|^2\Big)dx.
\end{align}
  In \eqref{Main-1} we may insert $\chi_R^2$, we can write
  \begin{align}
    \eqref{Main-1}=&8\left(\int_{\R^N}|\nabla(\chi_Ru)|^2dx-\frac{B}{2p}\int_{\R^N}(I_\alpha\ast|\cdot|^b|\chi_Ru|^p)
    |x|^b|\chi_Ru|^pdx\right)\\
    &+\int_{\R^N}\mathcal{O}\left(\frac{|u|^2}{R^2}\right)dx+\mathcal{O}\left(\int_{|x|>R}(I_\alpha\ast|\cdot|^b|u|^p)
    |x|^b|u|^pdx\right).
  \end{align}
 Continuing from above, we deduce
  $$\frac{1}{T}\int_{0}^T\int_{|x|<R}|u(t,x)|^\frac{2N}{N-2}dxdt\lesssim_{u,\delta} \frac{R}{T}+\frac{1}{R^{-b}}.$$
 Choosing $T$ sufficiently large and $T=R^{1-b}$ implies
 $$\frac{1}{T}\int_{\frac{T}{2}}^{T}\int_{|x|<T^{\frac1{1-b}}}|u(t,x)|^{\frac{2N}{N-2}}dxdt
 <T^{\frac{-b}{1-b}}.$$
 which suffices to give the desired result.
\end{proof}

\section*{Appendix: Morawetz type estimate}

 \noindent

  In this appendix, we consider the Morawetz estimate for NLS in $\R^N$. Supposing the function $u(t,x)$ solves $$i\pa_tu+\Delta u=F(t,x),\ \ (t,x)\in\R\times\R^N.$$
  Define Morawetz action
  $$M_a(t):=2{\rm Im}\int_{\R^N}\nabla a\cdot\nabla u\bar{u}dx.$$
  By a simple computation shows
  \begin{lemma}[Morawetz Identity \cite{Zh}]
    There holds
    $$\frac{d}{dt}M_a(t)=\int_{\R^N}(-\Delta\Delta a)|u|^2dx+4\int_{\R^N}a_{jk}{\rm Re}(\pa_j\bar{u}\pa_ku)dx+2\int_{\R^N}\nabla a(x)\cdot\{F,u\}_Pdx.$$
    where $\{f,g\}_P=Re(\bar{f}\nabla g-\bar{g}\nabla f)$. 
  \end{lemma}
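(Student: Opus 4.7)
The plan is to differentiate $M_a(t)$ in time using the equation in the form $\pa_t u = i\Delta u - iF$ (and its complex conjugate $\pa_t\bar u = -i\Delta\bar u + i\bar F$), and then integrate by parts to expose the three structural terms on the right-hand side.

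First I would write $M_a(t) = 2\,{\rm Im}\int a_j\,\bar u\,\pa_j u\,dx$ with summation over $j$, so that
\[
\frac{d}{dt}M_a(t) = 2\,{\rm Im}\int a_j\bigl(\pa_j\pa_t u\,\bar u + \pa_j u\,\pa_t\bar u\bigr)\,dx.
\]
Substituting the equation converts every ${\rm Im}(i\,\cdot)$ into a ${\rm Re}(\cdot)$. The $F$-dependent contributions collect into
\[
2\int a_j\,{\rm Re}\bigl(\bar F\,\pa_j u - \bar u\,\pa_j F\bigr)\,dx = 2\int \nabla a\cdot\{F,u\}_P\,dx,
\]
which is exactly the last claimed term.

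The remaining linear piece equals $2\int a_j\,{\rm Re}\bigl(\bar u\,\pa_j\Delta u - \pa_j u\,\Delta\bar u\bigr)\,dx$. To handle it, I would verify the pointwise identity
\[
{\rm Re}\bigl(\bar u\,\pa_j\Delta u - \pa_j u\,\Delta\bar u\bigr) = \tfrac12\,\pa_j\Delta|u|^2 - 2\,\pa_k\,{\rm Re}(\pa_k u\,\pa_j\bar u),
\]
which follows by expanding $\pa_j\,{\rm Re}(\bar u\Delta u) = \tfrac12\pa_j\Delta|u|^2 - \pa_j|\nabla u|^2$ and the product rule $\pa_k\,{\rm Re}(\pa_k u\,\pa_j\bar u) = {\rm Re}(\Delta u\,\pa_j\bar u) + \tfrac12\pa_j|\nabla u|^2$ (summed over $k$). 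Multiplying by $a_j$ and integrating, two integrations by parts on the first term produce $-\tfrac12\int\Delta\Delta a\,|u|^2\,dx$, while a single integration by parts on the second produces $2\int a_{jk}\,{\rm Re}(\pa_j\bar u\,\pa_k u)\,dx$. The overall factor of $2$ then recovers the first two terms in the statement.

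The main obstacle is purely bookkeeping: keeping track of the $\pm i$ factors when passing between ${\rm Im}$ and ${\rm Re}$, and the signs generated by successive integrations by parts in the virial identity for $-\Delta\Delta a$. Boundary terms may be discarded at the formal level and then justified for the $H^1$ solutions of interest by a standard approximation argument with Schwartz data, so no genuine analytic difficulty arises beyond the algebraic manipulations sketched above.
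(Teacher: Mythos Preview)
Your proposal is correct and is precisely the standard direct computation; the paper itself does not give a proof but simply cites \cite{Zh} and calls it ``a simple computation.'' One cosmetic slip: moving from $\tfrac12\int a_j\,\pa_j\Delta|u|^2\,dx$ to $-\tfrac12\int(\Delta\Delta a)\,|u|^2\,dx$ requires three integrations by parts rather than two, but the conclusion and all signs are right.
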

  Next, we give a direction examples, i.e.,
  $F(t,x)=\lambda(I_\alpha\ast|\cdot|^b|u|^p)|x|^b|u|^{p-2}u,$ then
\begin{align}\nonumber
  2\int_{\R^N}\nabla a(x)\cdot\{F,u\}_Pdx=&\lambda\left(2-\frac{4}{p}\right)\int_{\R^N}\Delta a|u|^p(I_\alpha\ast|\cdot|^b|u|^p)|x|^bdx\\\nonumber
  &-\frac{4\lambda}p\int_{\R^N}\nabla a(x)\cdot\nabla[(I_\alpha\ast|\cdot|^b|u|^p)|x|^b]|u|^pdx\\\nonumber
  =&\lambda\left(2-\frac{4}{p}\right)\int_{\R^N}\Delta a|u|^p(I_\alpha\ast|\cdot|^b|u|^p)|x|^bdx\\\label{Main1}
  &-\frac{4\lambda}p\int_{\R^N}\nabla a\nabla(|x|^b)(I_\alpha\ast|\cdot|^b|u|^p)|u|^pdx\\\label{Main2}
  &-\frac{4\lambda}p\int_{\R^N}\nabla a((\nabla I_\alpha)\ast|\cdot|^b|u|^p)|x|^b|u|^pdx.
\end{align}
\textbf{The estimate of \eqref{Main1}:} By a simple computation, we have
$$\eqref{Main1}=\frac{-4b\lambda}p\int_{\R^N}\nabla a\cdot\frac{x}{|x|^2}(I_\alpha\ast|\cdot|^b|u|^p)|u|^pdx.$$
\textbf{The estimate of \eqref{Main2}:} Using the definition of $I_\alpha\ast|\cdot|^b|u|^p$, we obtain
\begin{align*}
\eqref{Main2}=&\frac{4\mathcal{K}(N-\alpha)\lambda}{p}\int_{\R^N}\int_{\R^N}\nabla a(x)\cdot\frac{x-y}{|x-y|^{N-\alpha+2}}|y|^b|u|^p|x|^b|u|^pdydx\\
=&-\frac{4\mathcal{K}(N-\alpha)\lambda}{p}\int_{\R^N}\int_{\R^N}\nabla a(y)\cdot\frac{x-y}{|x-y|^{N-\alpha+2}}|y|^b|u|^p|x|^b|u|^pdydx.
\end{align*}
Thus, we get
\begin{align*}
\eqref{Main2}=&\frac{2\mathcal{K}(N-\alpha)\lambda}{p}\int_{\R^N}\int_{\R^N}(\nabla a(x)-\nabla a(y))\cdot\frac{x-y}{|x-y|^{N-\alpha+2}}|y|^b|u|^p|x|^b|u|^pdydx.
\end{align*}
Hence, we get the Morawetz estimate
\begin{align*}
 \frac{d}{dt}M_a(t)=&\int_{\R^N}(-\Delta\Delta a)|u|^2dx+4\int_{\R^N}a_{jk}{\rm Re}(\pa_j\bar{u}\pa_ku)dx\\
 &+\lambda\left(2-\frac{4}{p}\right)\int_{\R^N}\Delta a|u|^p(I_\alpha\ast|\cdot|^b|u|^p)|x|^bdx\\
 &-\frac{4b\lambda}p\int_{\R^N}\nabla a\cdot\frac{x}{|x|^{2-b}}(I_\alpha\ast|\cdot|^b|u|^p)|u|^pdx\\
 &+\frac{2\mathcal{K}(N-\alpha)\lambda}{p}\int_{\R^N}\int_{\R^N}(\nabla a(x)-\nabla a(y))\cdot\frac{x-y}{|x-y|^{d-\alpha+2}}|y|^b|u|^p|x|^b|u|^pdydx.
\end{align*}

 Next, we choose $a(x)$ as in \eqref{weight-function}. Let $D:=\{x\in\R^N||x|\leq R\}\times\{y\in\R^N||y|\leq R\}\subset\R^{2N}$ and $D^{c}=\R^{2N}-D.$ Since $\nabla a(x)-\nabla a(y)=2(x-y)$ on $D$, then
\begin{align*}
  \frac{p}{2\mathcal{K}\lambda(N-\alpha)}\eqref{Main2}=&2\iint_{D}\frac{|y|^b|u(y)|^p|x|^b|u(x)|^p}{|x-y|^{N-\alpha}}dydx\\
  &+\iint_{D^c}(\nabla a(x)-\nabla a(y))\cdot\frac{x-y}{|x-y|^{N-\alpha+2}}|y|^b|u(y)|^p|x|^b|u(x)|^pdydx.
\end{align*}
\indent For the last integral, according to the definition of $a(x)$, we have $|\pa^{\beta}a(x)|\lesssim R^{2-|\beta|}(|\beta|\geq1)$ when $|x-y|<R$. At the same time, we can obtain
$$\left|(\nabla a(x)-\nabla a(y))\cdot\frac{x-y}{|x-y|^2}\right|\lesssim \|D^2a(x)\|_{L^\infty}.$$
\indent Else, when $|x-y|\geq R,$ then
$$\left|(\nabla a(x)-\nabla a(y))\cdot\frac{x-y}{|x-y|^2}\right|\lesssim \frac{\|D^2a(x)\|_{L^\infty}}{R}.$$
Moreover, by symmetry
$$\Big|\iint_{D^c}(\nabla a(x)-\nabla a(y))\cdot\frac{x-y}{|x-y|^{N-\alpha+2}}|y|^b|u(y)|^p|x|^b|u(x)|^pdydx\Big|\leq \frac2{\mathcal{K}}\int_{|x|>R}(I_\alpha\ast|\cdot|^b|u|^p)|x|^b|u(x)|^pdx.$$
Thus, We have the Morawetz estimate,
\begin{align*}\nonumber
 \frac{d}{dt}M_a(t)=&\int_{\R^N}(-\Delta\Delta a)|u|^2dx+4{\rm Re}\int_{\R^N}a_{jk}\pa_j\bar{u}\pa_kudx\\
 &+\lambda\bigg(2-\frac{4}{p}\bigg)\int_{\R^N}\Delta a|x|^b|u|^p(I_\alpha\ast|\cdot|^b|u|^p)dx\\\label{Main2-1add}
 &+\frac{4\mathcal{K}\lambda(N-\alpha)}{p}\iint_{D}\frac{|y|^b|u(y)|^p|x|^b|u(x)|^p}{|x-y|^{N-\alpha}}dydx\\
 &+\mathcal{O}\left(\int_{|x|>R}(I_\alpha\ast|\cdot|^b|u|^p)|x|^b|u|^pdx\right).
\end{align*}

\begin{center}

\end{center}

\begin{thebibliography}{99}
\addcontentsline{toc}{section}{References}


\bibitem{AS} M. G. Alharbi, T. Saanouni, \emph{Sharp threshold of global well-posedness vs finite time blow-up for a class of inhomogeneous Choquard equations.} J. Math. Phys., 60, 081514(2019).

\bibitem{Cam} L. Campos, \emph{Scattering of radial solutions to inhomogeneous nonlinear Schr\"odinger equation.}Nonlinear Anal. 202, 1-17(2021).

\bibitem{Ca}
T.~Cazenave.
\emph{Semilinear Schr\"odinger equations.} American Mathematical Society, 2003.

\bibitem{DM-2017-PAMS}
B. Dodson and J. Murphy.
\emph{A new proof of scattering below the ground state for the 3d radial
  focusing cubic NLS}. Proceedings of the American Mathematical Society,
  145(11):4859-4867(2017).

  \bibitem{DM}
B. Dodson and J. Murphy.
\emph{A new proof of scattering below the ground state for the non-radial
  focusing NLS}. Math. Res. Lett. 25, no.6, 1805-1825(2018).


\bibitem{FY} B. Feng and X. Yuan, \emph{On the Cauchy problem for the Schr\"odinger- Hartree equation.} Evol. Equat. and Cont. Theory, 4(4), 431-445(2015).


\bibitem{Fo} D. Fochi, \emph{Inhomogeneous Strichartz estimate.} J. Hyperbolic Differ. Equ. 2, no. 1, 1-24(2005).


\bibitem{KT}
M.~Keel and T.~Tao.
\emph{Endpoint Strichartz estimates}. American Journal of Mathematics, 120(5):955-980(1998).
\bibitem{Li} E. Lieb, \emph{Analysis, 2nd ed.,} Graduate Studies in Mathematics, American Mathematical Society, Providence, RI, Vol 14, (2001).

\bibitem{MXZ3} C. Miao, G. Xu and L. Zhao, \emph{Global well-posedness and scattering for the energy-critical, defocusing Hartree equation for radial case.} J. Func. Anal. 253(2), 605-627(2007).

\bibitem{MXZ4} C. Miao, G. Xu and L. Zhao, \emph{The Cauchy problem of the Hartree equation.} J. PDE. 21, 22-44(2008).

\bibitem{Mu}J. Murphy, \emph{A simple proof of scattering for the intercritical inhomogeneous NLS}, arXiv: 2101.04811(2021).

\bibitem{Sa} T. Saanouni, \emph{Scattering threshold for the focusing Choquard equation.} Nonlinear Differ. Equ. Appl., 26(41), (2019).

 \bibitem{Sa2} T. Saanouni, \emph{Scattering threshold for a class of radial focusing inhomogeneous Hartree equations.} arXiv:2010.07144v1(2020).


\bibitem{Zh}J. Zheng, \emph{Focusing NLS with inverse square potential.} J. Math. P, \textbf{59}, 111502(2018).






\end{thebibliography}
 \end{document}